\newtheoremstyle{dotless}{}{}{\itshape}{}{\bfseries}{}{ }{}
\newtheorem{Theorem}{Theorem}[section]
\newtheorem{Question}[Theorem]{Question} 
\newtheorem{lemma}[Theorem]{Lemma}
\newtheorem*{theorem*}{Theorem}
\theoremstyle{definition} 
\newtheorem{remark}[Theorem]{Remark}
\newtheorem*{Remark}{Remark} 
\newtheorem*{Notation}{Notation}
\DeclareMathOperator{\piprod}{\raisebox{-0.1em}{\huge{$\pi$}}\kern -0.2em}
\newcommand{\rr}{{\mathbb R}}
\newcommand{\zz}{{\mathbb Z}}
\newcommand{\wtX}{\widetilde{X}}
\newcommand{\Isom}{\operatorname{Isom}}
\def\clap#1{\hbox to 0pt{\hss#1\hss}}
\newcommand{\comment}[1]{} 
\newcommand{\gG}{\Gamma} 
\newcommand{\gO}{\Omega}
\newcommand{\Itin}{Itin_p}
\newcommand{\SB}{{SB}}
\newenvironment{enumeratei'}{ 
\begin{enumerate}[\upshape (i)$'$]}
	{ 
\end{enumerate}
} 
\newenvironment{enumerate1'}{ 
\begin{enumerate}[\upshape (1)$'$]}
	{ 
\end{enumerate}
}
\newenvironment{enumeratea'}{ 
\begin{enumerate}[\upshape (a)$'$]}{ 
\end{enumerate}
}
  \definecolor{colore}{cmyk}{0,1,0.6,0}
  \definecolor{coloregen}{cmyk}{0.7,0,1,0}
  \definecolor{coloresimo}{cmyk}{1,0.6,0,0}
  \definecolor{colore}{cmyk}{0,0,0,1}
  \definecolor{coloregen}{cmyk}{0,0,0,1}
  \definecolor{coloresimo}{cmyk}{0,0,0,1}
\numberwithin{equation}{section} 
\newcommand{\CAT}{\operatorname{CAT}}
\definecolor{amethyst}{rgb}{0.6, 0.4, 0.8}
\newcommand{\hide}[1]{}
\newcommand{\p}{\partial_{\infty}}
\begin{document}

\title{Nonplanar graphs in boundaries of CAT(0) groups}

\author{Kevin Schreve
\and {Emily Stark}  
}

\date{\today} \maketitle

\begin{abstract} 
%\smallskip

\noindent
Croke and Kleiner constructed two homeomorphic locally $\CAT(0)$ complexes whose universal covers have visual boundaries that are not homeomorphic.  We construct two homeomorphic locally $\CAT(0)$ complexes so that the visual boundary of one universal cover contains a nonplanar graph, while the visual boundary of the other does not. In contrast, we prove for any two locally CAT(0) metrics on the Croke-Kleiner complex, if a finite graph embeds in the visual boundary of one universal cover, then the graph embeds in the visual  boundary of the other. 

\medskip 

\noindent
\textbf{AMS classification numbers}. Primary: 20F65. 
Secondary: 57M60, 20F67, 20E06.
%% 20F65 - Geometric group theory
%% 57M60 - Group actions in low dimensions
%% 20F67 - Hyperbolic groups and nonpositively curved groups
%% 20E06 - Free products, free products with amalgamation, HNN extensions, and generalizations
%%%% 20F36 - Braid groups, Artin groups
%%%% 20F55 - Reflection and Coxeter groups
%%%% 57S30 - Discontinuous groups of transformations
%%%% 57Q35 - Embeddings and immersions
%%%% 20J06 - Cohomology of groups
%%%% 32S22 - Relations with arrangments of hyperplanes 

\smallskip

\noindent
\textbf{Keywords}: CAT(0) boundary, group boundary, nonplanar graph \end{abstract}

\section{Introduction}

If $X$ is a Gromov hyperbolic space, there is a naturally defined boundary at infinity $\partial_\infty X$, and any quasi-isometry of $X$ induces a self-homeomorphism of $\partial_\infty X$. Moreover, if $G$ is a word-hyperbolic group, then any two boundaries of $G$ are $G$-equivariantly homeomorphic. Croke and Kleiner~\cite{crokekleiner} showed the same phenomena does not occur for $\CAT(0)$ groups; i.e. there is a group which acts geometrically (properly and cocompactly by isometries) on two $\CAT(0)$ complexes with non-homeomorphic visual boundaries. Later, Wilson~\cite{wilson} showed that the Croke-Kleiner examples admit uncountably many non-homeomorphic boundaries; see also \cite{bowersruane,crokekleiner02,qing}. 

One can still ask what properties of the visual boundary are well-defined invariants of a $\CAT(0)$ group. For example, the topological dimension of a $\CAT(0)$-group is a quasi-isometry invariant \cite{bestvinamess,geogheganontaneda}. %It also follows from Papasoglu~\cite{papasoglu} and Papasoglu--Swenson~\cite{papasogluswenson} that for any one-ended CAT(0) group the existence of cut pairs in the boundary is a quasi-isometry invariant. 
%\nota{I still think this papasoglu swenson thing is true but I don't think we need it}
In a different direction, Guilbault and Mooney~\cite{guilbaultmooney} proved that all boundaries of the Croke-Kleiner examples are $G$-equivariantly cell-like equivalent. 

In this paper, we show that the existence of a nonplanar graph in a $\CAT(0)$ boundary is not a well-defined invariant for a $\CAT(0)$ group. 

\begin{Theorem}\label{t:main}
There exist two homeomorphic locally $\CAT(0)$ complexes $X$ and $X'$ so that the visual boundary of $\widetilde X$ contains a nonplanar graph and the visual boundary of $\widetilde X'$ does not.
\end{Theorem}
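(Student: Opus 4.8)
The plan is to realize $X$ and $X'$ as one and the same $2$-complex carrying two different piecewise-Euclidean locally $\CAT(0)$ metrics, so that the homeomorphism demanded by the statement is essentially the identity and all the content lies in how the two metrics reshape the visual boundary of the universal cover. Following the strategy of Croke and Kleiner, I would build this complex by gluing flat tori along embedded geodesic circles in a fixed combinatorial pattern, keeping the pattern (hence the homeomorphism type) rigid while letting the shapes of the tori and the angles between the gluing circles vary; this produces a family of locally $\CAT(0)$ metrics on one fixed complex. The complex should be chosen so that the ``tree of flats'' in the universal cover is richer than in the Croke--Kleiner example: rich enough that for a metric $g$ in which certain gluing angles equal $\pi/2$, the boundary circles of these flats become mutually linked, while for a metric $g'$ in which those angles are generic they become unlinked. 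Before anything else I would check the link condition for $g$ and for $g'$, so that both universal covers are genuinely $\CAT(0)$, and record the homeomorphism $X \cong X'$.

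For the universal cover with metric $g$ I would exhibit a nonplanar graph directly, as a subspace of $\partial_\infty \widetilde X$ assembled from the boundary circles of finitely many flats. The model case to keep in mind is that the right-angle gluings make a convex region of $\widetilde X$ a metric product $T \times T'$ of two unbounded trees, whose boundary $\partial_\infty(T\times T') = \partial_\infty T * \partial_\infty T'$ is the join of two Cantor sets and therefore contains a copy of $K_{3,3}$ once one picks three ends in each factor; convexity guarantees that $\partial_\infty(T\times T')$ embeds in $\partial_\infty \widetilde X$. More generally I would select a finite family of flats whose boundary circles, in the cyclic arrangement forced by the special angles, cross transversally in a pattern from which a subdivision of $K_{3,3}$ (or $K_5$) can be read off. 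The only delicate point here is confirming that the special angles really do produce this linked configuration.

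For the universal cover with metric $g'$ I would prove the stronger statement that $\partial_\infty \widetilde{X'}$ embeds in $S^2$, so that it cannot contain \emph{any} nonplanar graph. The generic angles should make the configuration above fall apart: the boundary circles of the flats of $\widetilde{X'}$ now meet each other only in small sets -- pairs of points, the endpoints of the shared geodesic lines -- and I would organize $\widetilde{X'}$ over the tree that indexes its flats and decompose $\partial_\infty \widetilde{X'}$ correspondingly as a ``tree of circles'': a nested union, indexed by that tree, of boundary circles of flats, with consecutive circles joined along a pair of points. The core of this case is a lemma asserting that such a locally finite, tree-organized union of circles -- more generally, of planar compacta glued along $0$-dimensional sets in a tree pattern -- is planar, proved by choosing the planar embeddings of the finite sub-pictures coherently and passing to the limit. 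Assembling these into a single embedding $\partial_\infty \widetilde{X'} \hookrightarrow S^2$ completes the argument.

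The step I expect to be the main obstacle is this planar side. Producing a single nonplanar graph is easy; \emph{excluding} all of them forces a global structural description of $\partial_\infty \widetilde{X'}$ together with an honest planarity criterion for the resulting tree of circles, and in particular requires controlling how infinitely many circles accumulate, since ``planar at every finite stage'' does not pass to the limit for free. A secondary difficulty is engineering the complex and the two metrics simultaneously so that all of the following hold at once: the link condition for $g$ and for $g'$; the linked configuration (or product region) for $g$; and the unlinked, provably planar configuration for $g'$, with no stray family of flats reintroducing a nonplanar graph into $\partial_\infty \widetilde{X'}$.
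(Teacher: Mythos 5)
Your overall framing (one complex, two piecewise-Euclidean locally CAT(0) metrics, right angles versus skewed angles at the gluing curves) is exactly the paper's setup, and your right-angled half is plausible in spirit: the paper gets $K_{3,3}$ not from a product-of-trees region (no such convex region exists in these torus amalgams) but from a ``horizontal circle'' in $\partial_\infty \widetilde X$ built by extending horizontal geodesics through half-flats in adjacent blocks, together with two poles and longitudes. The genuine gap is in your second half. You propose to prove the strictly stronger statement that $\partial_\infty \widetilde{X'}$ embeds in $S^2$. For the kind of example you describe this is not just hard, it is false in the paper's example and quite possibly unachievable in general: the paper proves (Theorem 4.1) that for \emph{every} locally CAT(0) metric on $X$ the visual boundary is nonplanar, even though in the non-right-angled case it contains no nonplanar graph. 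These two facts are compatible only because the boundary is not locally connected, so Claytor's criterion does not apply; your plan implicitly assumes ``no nonplanar graph'' should be certified by planarity of the whole boundary, which is exactly the wrong dichotomy here. Moreover, if you could carry out your plan you would have produced one group acting geometrically on two CAT(0) spaces, one with planar and one with nonplanar boundary --- this is Question 1.4 of the paper, which the authors state they cannot answer. Your ``tree of circles glued along pairs of points'' model also misdescribes the boundary: the pieces are block boundaries, each a suspension of a Cantor set (boundary of tree $\times\,\mathbb{R}$), glued along wall circles, and on top of that there is a large set of limit points with infinite itineraries; the union of flat-boundary circles is only a small part of $\partial_\infty \widetilde{X'}$, and ``planar at every finite stage'' genuinely fails to pass to this limit, as Theorem 4.1 shows.

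What the paper does instead, and what your proposal is missing, is a direct exclusion of embedded nonplanar graphs without any planarity statement: when the angle is not $\pi/2$ the horizontal circles disappear, and a local path-component analysis (Theorem 3.3 and Lemmas 3.4--3.5) shows that near a pole the only paths stay in the block boundary, that pole pairs are cut pairs separating all but boundary-pair longitudes, and a coning/itinerary argument (Lemma 5.2) handles points not lying in any block boundary; one then rules out subdivided $K_{3,3}$ and $K_5$ by Kuratowski, since a trivalent vertex would force three disjoint arcs through at most two poles. If you want to salvage your approach, replace the planned embedding $\partial_\infty \widetilde{X'} \hookrightarrow S^2$ with this kind of local connectivity/cut-pair analysis; as written, the key lemma your argument rests on cannot hold for these spaces.
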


On the other hand, we prove that the homeomorphism type of the boundaries for the Croke-Kleiner examples cannot be detected by finite graphs. These complexes, denoted $X_{CK}$, are constructed by gluing two flat tori $T_1$ and $T_2$ onto a third flat torus $T_0$ along simple closed curves which generate $\pi_1(T_0)$.

\begin{Theorem}\label{theorem_main2}
Suppose $X_1$ and $X_2$ are locally CAT(0) complexes homeomorphic to $X_{CK}$. If $\gG$ is a finite graph contained in $\partial_\infty \widetilde{X}_1$, then there is an embedding of the graph $\gG$ into $\partial_\infty \widetilde{X}_2$. 
\end{Theorem}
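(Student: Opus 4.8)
The plan is to use the metric--independent splitting of $G = \pi_1(X_{CK})$. Since $X_{CK}$ amalgamates three tori along the curves $\alpha$ and $\beta$, we have $G \cong \zz^2 *_{\zz} \zz^2 *_{\zz} \zz^2$, so for any locally $\CAT(0)$ metric the universal cover $\widetilde X_j$ ($j = 1,2$) is a tree of Euclidean flats over the common Bass--Serre tree $\ct$: one flat $F_v \cong \rr^2$ per vertex $v$, with $F_v \cap F_w$ a geodesic line $L_e$ for each edge $e=\{v,w\}$, and each $F_v$ convex in $\widetilde X_j$. The combinatorial data --- which flats share which lines, which edge--lines inside a fixed flat are mutually parallel, the $\zz^2$--type of each vertex --- is read off from $\ct$ and is the same for $j=1,2$; only the shapes of the flats vary. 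I would first record the resulting picture of $\p\widetilde X_j$: it is covered by the circles $\p F_v$; distinct circles overlap only in endpoints of edge--lines, in a pattern governed by the (metric--independent) combinatorial data of which edge--lines are mutually parallel; each pair $\{L_e^{\pm}\}$ is a cut pair of $\p\widetilde X_j$ separating it into the two parts indexed by the components of $\ct \smallsetminus e$; and the marked points on a single circle $\p F_v$ occur in a cyclic order depending only on the type of $v$ --- in particular, at a $\zz^2$--vertex carrying two amalgamating directions the four marked points alternate, since $\alpha,\beta$ form a basis of $\zz^2$ and so lift to lines spanning $\rr^2$ in any flat metric.

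Given a finite graph $\Gamma$ embedded in $\p\widetilde X_1$, the next step is to confine $\Gamma$ to the boundary of a \emph{finite} sub--tree of flats. Only finitely many edges $e$ can ``split'' $\Gamma$ --- be such that $\Gamma$ meets both parts of $\p\widetilde X_1 \smallsetminus \{L_e^{\pm}\}$ --- because if both endpoints $L_e^{\pm}$ lie on $\Gamma$ then $\{L_e^{\pm}\}$ is one of the finitely many separating pairs of the finite graph $\Gamma$, if exactly one does it is one of its finitely many cut points, and if neither does then $\Gamma$ is disconnected there and we may argue componentwise. A compactness argument of the same type shows $\Gamma$ meets only finitely many of the circles $\p F_v$ and no boundary point lying over an end of $\ct$. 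Let $\ct_0 \subset \ct$ be the finite subtree these data span; then $\Gamma \subset \p Z_1$ where $Z_1 := \bigcup_{v \in \ct_0} F_v$. Since each $F_v$ is convex and the $F_v$ in $Z_1$ form a finite tree, $Z_1$ is convex in $\widetilde X_1$, so $\p Z_1$ embeds in $\p\widetilde X_1$ with image containing $\Gamma$.

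The punchline is then soft. Because $\ct_0$ is finite, a $\CAT(0)$ geodesic ray in $Z_1$ crosses only finitely many seams and is eventually trapped in a single leaf flat, so $\p Z_1 = \bigcup_{v \in \ct_0} \p F_v$ is literally a finite graph: finitely many circles identified along the finitely many pairs $\{L_e^{\pm}\}$. Its homeomorphism type is determined by $\ct_0$ together with the type--determined cyclic orders of marked points --- all of which is metric--independent --- using that distinct flats' boundary circles meet only in shared endpoints of edge--lines (so the identification defining $\p Z_1$ is exactly the combinatorial one) and that a continuous bijection from a compact space to a metric space is a homeomorphism. Hence the analogous finite tree of flats $Z_2 := \bigcup_{v \in \ct_0} F_v \subset \widetilde X_2$ satisfies $\p Z_2 \cong \p Z_1$, and $\p Z_2$ embeds in $\p\widetilde X_2$ by convexity. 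Composing $\Gamma \hookrightarrow \p Z_1 \cong \p Z_2 \hookrightarrow \p\widetilde X_2$ proves the theorem.

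The main obstacle is the geometric input underlying the first two steps: one must establish, carefully and with the $\CAT(0)$ geometry, that distinct boundary circles meet only in the endpoints of a shared seam --- equivalently, that two flats sharing a line are asymptotic only along that line, not along a half--plane --- that each $\{L_e^{\pm}\}$ separates $\p\widetilde X_1$ exactly as asserted, and that no locally connected continuum, in particular no embedded finite graph, can thread through infinitely many flats. These facts are implicit in the Croke--Kleiner analysis but need to be made precise here; once they are, the confinement step and the combinatorial rigidity of the boundary of a finite tree of flats do the rest.
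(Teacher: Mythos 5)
Your endgame differs from the paper's in an interesting way: instead of quoting Xie's theorem that the core of the Tits boundary (the union of all topological circles) is a metric-independent invariant of $\widetilde X_{CK}$, you propose to verify directly that the visual boundary of a finite convex tree of flats is combinatorially rigid, and that part of your plan is essentially sound (a finite tree of flats is convex, its boundary is the finite union of the circles $\partial_\infty F_v$ glued along endpoint pairs of parallel families of seams, and since each flat of $\widetilde X_{CK}$ carries at most two parallel classes of seams the gluing pattern is metric-independent). However, there is a genuine gap at the step you rely on to get there: the confinement of $\Gamma$ to the boundary of a finite subtree of flats. Your finiteness argument via cut pairs is incorrect as stated -- a finite graph, viewed as a topological space, has uncountably many separating pairs (every pair of points on an embedded circle, for instance), so "finitely many separating pairs of $\Gamma$" does not bound the number of seam pairs $\{L_e^{\pm}\}$ that can split $\Gamma$. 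Also, the pair $\{L_e^{\pm}\}$ is shared by the whole parallel family of seams in a wall flat, so its complement has many components, not the two indexed by $T\smallsetminus e$; this matters for any counting argument of the type you sketch.

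More seriously, your plan assumes that $\Gamma$ contains no boundary point with infinite itinerary (no point "over an end of the Bass--Serre tree"), which you defer to "a compactness argument of the same type." This is exactly the hard content, and the paper does not prove that statement; it is not clear it is even true for the given embedding. Instead, the paper's proof \emph{re-embeds} $\Gamma$: it first shows (via the coning argument of Theorem~\ref{t:planar}) that no circle and no vertex of valence $\geq 3$ consists of or lies at points of constant infinite itinerary, then uses Lemma~\ref{lemma:inf_tits_len}, Lemma~\ref{l:itineraries}, and the density of finite-itinerary points (Qing) to replace each edge of infinite Tits length by nearby longitude segments between poles of a common block, gluing overlapping replacements along shared poles to obtain a new embedding of $\Gamma$ into the union of block boundaries of finite Tits length. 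Without either proving that $\Gamma$ literally avoids infinite-itinerary points (unlikely to be provable) or carrying out such a replacement argument, the passage from an arbitrary embedded finite graph to a graph supported on finitely many flat boundaries is missing, and the rest of your argument has nothing to act on. If you supply that replacement step, your combinatorial identification of boundaries of finite trees of flats would give a self-contained alternative to invoking Xie's theorem.
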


Our interest in (non)-planarity of the visual boundary is motivated by the following two questions, neither of which we can answer. 

\begin{Question}\label{q:1}
Suppose a group $G$ acts geometrically on a CAT(0) space $X$ so that $\partial_\infty X$ is planar. Does $G$ have a finite-index subgroup which is a $3$-manifold group?
\end{Question}

\begin{Question}\label{q:2}
Can a group $G$ act geometrically on two CAT(0) spaces $X$ and $X'$ so that $\partial_\infty X$ is planar and $\partial_\infty X'$ is nonplanar?
\end{Question}

Question~\ref{q:1} was asked by Ha\"{i}ssinsky for hyperbolic groups. A positive answer implies the Cannon Conjecture~\cite[Conjecture 11.34]{cannon}. Ha\"{i}ssinsky proved that the answer to Question \ref{q:1} is positive if $G$ is hyperbolic and cubulated ~\cite{haissinsky}. It is also necessary to ask for a finite-index subgroup, as there are torsion-free hyperbolic and $\CAT(0)$ groups with planar boundary which are not $3$-manifold groups but have $3$-manifold groups as finite-index subgroups~\cite{kapovichkleiner,hruskastarktran}. 

Regarding Question \ref{q:2}, Swenson~\cite{swenson99} showed that the boundary of a one-ended CAT(0) group has no global cut points provided the group does not contain an infinite torsion subgroup. In this case, if the boundary is also locally connected, the existence of an embedded nonplanar graph in the boundary is equivalent to nonplanarity of the boundary by a theorem of Claytor~\cite{claytor}. Our groups are torsion-free, but the boundaries in Theorem~\ref{t:main} that we construct are not locally connected, and we prove in Theorem~\ref{theorem_nonplanar} the boundaries that do not contain nonplanar graphs are also nonplanar.  

One relation between planarity of the boundary and $3$-manifold groups comes from a paper of Bestvina, Kapovich, and Kleiner \cite{bestvinakapovichkleiner}. A very special case of their main theorem implies that if $G$ acts geometrically on a CAT(0) space $X$ and $\partial_\infty X$ contains a nonplanar graph, then $G$ is not even quasi-isometric to a $3$-manifold group. It follows that our examples are not quasi-isometric to $3$-manifold groups.

This paper is organized as follows. In Section 2 we introduce and illustrate our main example; see Figure~\ref{figure:main_example}. Our example is formed by gluing tori to the torus boundary components of a $3$-manifold that is the product of a surface with boundary and a circle. %Onto one of the torus boundary components of the $3$-manifold we attach two tori along simple closed curves that span the boundary torus. 
%We show there is a metric on the $3$-manifold so that geodesic representatives of the gluing curves on the boundary torus meet at any angle. If these curves meet at right-angles, we say the complex is {\it right-angled}, and otherwise it is not. 
In Section 3 we determine the local path components of the visual boundary; we prove these depend on the locally $\CAT(0)$ metric on the complex. In Section 4 we show for any locally $\CAT(0)$ metric on the complex, the visual boundary is non-planar. In Section 5 we show the existence of nonplanar graphs depends on the locally CAT(0) metric. In Section 6 we analyze the finite subgraphs of the Croke-Kleiner boundaries.

\subsection*{Acknowledgements} The authors are thankful for helpful conversations with Ric Ancel, Radhika Gupta, Chris Hruska, and Genevieve Walsh. The authors were supported by a Faculty Allies for Diversity in Graduate Education grant, which helped finance the second author's trip to the University of Michigan. The second author was partially supported at the Technion by a Zuckermann STEM Leadership Postdoctoral Fellowship. This material is based upon work supported by the National Science Foundation under Award No. 1704364.

\section{Main example}

\begin{figure}
\begin{center}
 	 \begin{overpic}[scale=.55,tics=5]{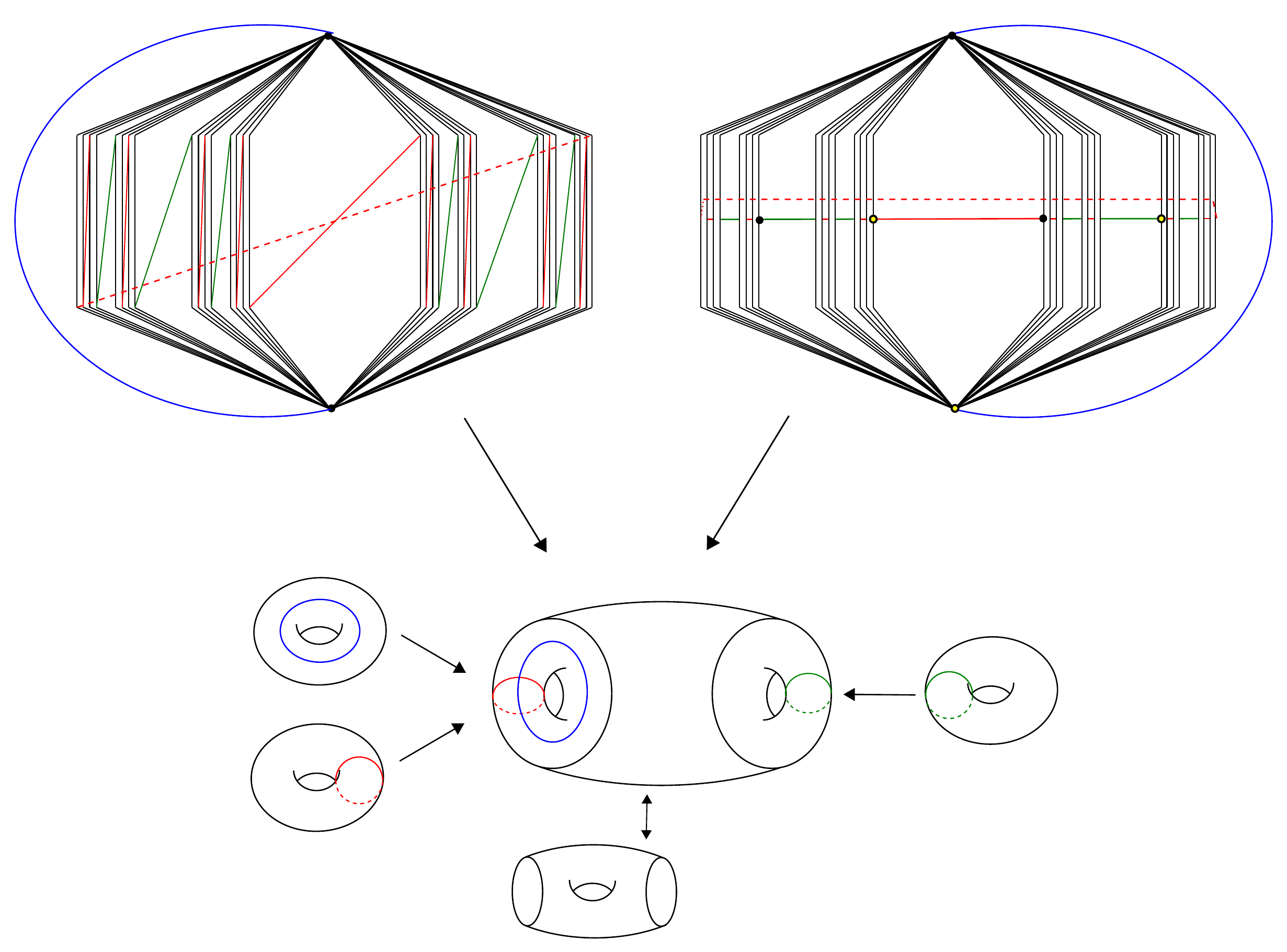} 
 	 \put(54,3.5){$\times$ \, $S^1$}
 	 \put(51.3,9.5){\small{$\cong$}}
 	 \put(5,20){$X$}
 	 \put(15,12.5){$T_0$}
 	 \put(83.5,20){$T_1$}
 	 \put(15,24){$T_2$}
 	 \put(47,24){\small{$\Sigma \times S^1$}}
 	 \put(42.5,5.7){\footnotesize{$\Sigma$}}
 	 \put(16,35){\small{not right-angled}}
 	 \put(62,35){\small{right-angled}}
 	 \put(73,72){\small{$a$}}
 	 \put(59.5,54.5){\small{$b$}}
 	 \put(79,54.5){\small{$c$}}
 	 \put(73,40){\small{$x$}}
 	 \put(68.5,54.5){\small{$y$}}
 	 \put(88,54.5){\small{$z$}}
	 \end{overpic}
	 \caption{{\small The main example $X$ and a subset of the boundary of $\widetilde{X}$. The space $X$ is {\it right-angled} if the tori $T_0$ and $T_2$ are glued to geodesics in a boundary torus of  $\Sigma \times S^1$ that meet at a right angle. 
	 On the right $X$ is right-angled, and on the left $X$ is not. The suspension of the Cantor set in black is the boundary  of a sub-block which covers $\Sigma \times S^1$. The red and green paths are subsets of the visual boundary $\widetilde T_i$. On the right these paths yield an embedded circle in $\p X$, and on the left they do not. Using this embedded circle, it is simple to find a $K_{3,3}$ subgraph of the right-angled boundary; the vertex sets are $\{a,b,c\}$ and $\{x,y,z\}$. Most of the work in this paper is to show that there are no $K_{3,3}$ subgraphs of the non right-angled boundary. } }
	 \label{figure:main_example}
\end{center}
\end{figure}

Let $\Sigma$ be a genus one surface with two boundary components. Let $\alpha_0$ and $\alpha_1$ denote the boundary curves of $\Sigma$. Form the product $\Sigma \times S^1$. Let $b_i \in \Sigma$ be a point on the boundary curve $\alpha_i$ for $i \in \{0,1\}$, and let $\beta_i = b_i \times S^1$. 

%%%%%% For current spacing purposes
%\clearpage

Form the space $X$ by gluing three tori, $T_0, T_1$, and $T_2$, onto the curves $\alpha_0, \alpha_1$, and $\beta_0$, respectively.  Note that after gluing on any two of these tori, the resulting space is homotopy equivalent to a $3$-manifold. 

\begin{Notation} Let $M = (\Sigma \times S^1) \cup_{\beta_0} T_2$, let $W_i = \alpha_i \times \beta_i$ for $i = 0,1$, and let $Y_i = W_i \cup_{\alpha_i} T_i$. 
\end{Notation}

\begin{lemma} We collect a few easy facts about  $\pi_1(X)$.
\begin{itemize}
\item The subgroups $\pi_1(M)$ and $\pi_1(Y_i)$ are isomorphic to the direct product of a free group with $\zz$. 
\item $\pi_1(X)$ has cohomological dimension $2$, and hence $\partial_\infty X$ is $1$-dimensional by \cite{bestvinamess}. 
\item $\pi_1(X)$ splits as an amalgamated product over each of the $\pi_1(\alpha_i)$.
\item The subgroup $\pi_1(Y_0 \cup_{\beta_0}T_2)$ is isomorphic to the Croke-Kleiner group.  
\end{itemize}
\end{lemma}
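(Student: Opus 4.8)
The plan is to verify the four bullet points directly from the construction of $X$ as a graph of spaces. Each of $M$, $Y_0$, $Y_1$ is an explicit gluing, so the fundamental groups can be read off from van Kampen's theorem, and the only real content is to identify the resulting groups and then assemble them.

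First I would handle the product statements. Recall $\Sigma$ is a genus one surface with two boundary components, so $\pi_1(\Sigma)$ is free of rank $3$ and $\pi_1(\Sigma\times S^1)\cong F_3\times\zz$. The torus $T_2$ is glued along $\beta_0=b_0\times S^1$, a curve representing $(1,t)$ where $t$ generates the $\zz$ factor; so $\pi_1(T_2)\cong\zz^2$ is amalgamated to $F_3\times\zz$ along the cyclic subgroup $\langle t\rangle$. Writing $\pi_1(T_2)=\langle t\rangle\times\langle s\rangle$, van Kampen gives $\pi_1(M)\cong(F_3\times\langle t\rangle)*_{\langle t\rangle}(\langle t\rangle\times\langle s\rangle)\cong(F_3*\langle s\rangle)\times\langle t\rangle\cong F_4\times\zz$, using that the amalgamation is over the common central $\zz$. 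The same computation works for $Y_i=W_i\cup_{\alpha_i}T_i$: here $W_i=\alpha_i\times\beta_i$ is a torus with $\pi_1\cong\langle\alpha_i\rangle\times\langle t\rangle$, and $T_i$ is a torus glued along $\alpha_i$, so $\pi_1(Y_i)\cong(\langle\alpha_i\rangle\times\langle t\rangle)*_{\langle\alpha_i\rangle}\zz^2\cong F_2\times\zz$ by the same central-amalgamation argument.

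Next, the cohomological dimension and splitting statements. Since $X$ is a $2$-dimensional locally $\CAT(0)$ complex that is aspherical, $\cd\pi_1(X)\le 2$; it is exactly $2$ because $\pi_1(X)$ contains $\zz^2$ (e.g. inside $\pi_1(T_1)$). Then $\partial_\infty\widetilde X$ is $1$-dimensional by the Bestvina--Mess formula $\dim\partial_\infty=\cd-1$ \cite{bestvinamess}. For the splitting over $\pi_1(\alpha_i)$: the curve $\alpha_i$ is a two-sided simple closed curve in $\Sigma\subset X$ whose complement in $X$ is connected (removing a boundary-parallel curve from $\Sigma$ leaves it connected, and the glued-on tori do not disconnect), so $X$ splits as an HNN extension or — after passing to the decomposition along $\alpha_0$ and $\alpha_1$ simultaneously — as an iterated amalgam; in each case the edge group is $\pi_1(\alpha_i)\cong\zz$, realizing the asserted amalgamated product splittings.

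Finally, the Croke--Kleiner identification. The subspace $Y_0\cup_{\beta_0}T_2$ deformation retracts onto $\alpha_0\cup T_0$ glued to $T_2$ along $\beta_0$, where $\beta_0$ and $\alpha_0$ are the two boundary curves of an annulus $\alpha_0\times[0,1]$... more precisely $Y_0\cup_{\beta_0}T_2$ is, up to homotopy, two tori $T_0$ and $T_2$ glued to a third torus $W_0$ (thickening the curves $\alpha_0$ and $\beta_0$ on it) along curves $\alpha_0,\beta_0$ that together generate $\pi_1(W_0)\cong\zz^2$; this is exactly the defining configuration of $X_{CK}$. Hence $\pi_1(Y_0\cup_{\beta_0}T_2)$ is the Croke--Kleiner group. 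The step I expect to require the most care is precisely this last one: making the homotopy equivalence between $Y_0\cup_{\beta_0}T_2$ and the Croke--Kleiner complex $X_{CK}$ explicit, i.e. checking that the curves $\alpha_0$ and $\beta_0$ really do form a generating pair of $\pi_1$ of the relevant torus and meet with the right intersection number, so that the amalgamated product presentations agree on the nose.
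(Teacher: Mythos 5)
The paper records this lemma as a list of ``easy facts'' and gives no proof, so the only question is whether your argument is sound. Your treatment of the first and fourth bullets is fine: van Kampen plus the observation that an amalgam of $A\times C$ and $B\times C$ over a common central $C$ is $(A*B)\times C$ gives $\pi_1(M)\cong F_4\times\zz$ and $\pi_1(Y_i)\cong F_2\times\zz$, and $Y_0\cup_{\beta_0}T_2$ is literally the Croke--Kleiner complex (two tori glued to the torus $W_0=\alpha_0\times\beta_0$ along the coordinate curves $\alpha_0,\beta_0$, which visibly generate $\pi_1(W_0)$), so no delicate homotopy equivalence or intersection-number check is needed there; the only point you leave implicit is $\pi_1$-injectivity of this subspace in $X$, which follows from Bass--Serre theory (or convexity of blocks) and which the paper also glosses over.

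There are, however, two genuine flaws as written. First, for the second bullet you assert that $X$ is a $2$-dimensional complex; it is not -- it contains the $3$-dimensional piece $\Sigma\times S^1$, metrized as a quotient of $\hh^2\times\rr$ -- so ``aspherical $2$-complex, hence $\cd\le 2$'' does not apply. The conclusion is still easy: either use Mayer--Vietoris for the graph-of-groups decomposition of $\pi_1(X)$ (vertex groups $F_3\times\zz$ and $\zz^2$ have $\cd=2$, edge groups $\zz$ have $\cd=1$), or note that $X$ is aspherical and homotopy equivalent to a $2$-complex (retract $\Sigma\times S^1$ to a spine cross $S^1$ and use that homotopic attaching maps give homotopy equivalent adjunction spaces). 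Second, your argument for the splittings over $\pi_1(\alpha_i)$ uses the wrong tool: the ``two-sided curve with connected complement, hence HNN extension'' dichotomy is for cutting along codimension-one subspaces, whereas $\alpha_i$ is a circle on a boundary torus of the $3$-dimensional piece and nothing is being cut along it; moreover, taken at face value your reasoning would produce an HNN splitting, which is not what the lemma asserts. The correct argument is the same decomposition you already used for the first bullet: $X=\bigl(M\cup_{\alpha_{1-i}}T_{1-i}\bigr)\cup_{\alpha_i}T_i$ with connected intersection $\alpha_i$, which is $\pi_1$-injective in both pieces, so van Kampen gives the amalgamated product over $\pi_1(\alpha_i)\cong\zz$ directly.
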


We define the \emph{angle} between $\alpha_0$ and $\beta_0$ to be the minimal distance in the Tits metric between the endpoints of their lifts in $\partial_\infty \widetilde W_0$.  

\begin{Theorem}\label{theorem_angles}
Given any $\delta \in (0, \frac{\pi}{2}]$, there is a locally $CAT(0)$ metric on $\Sigma \times S^1$ so that the angle between $\alpha_0$ and $\beta_0$ is $\delta$. 
\end{Theorem}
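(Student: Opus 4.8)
The plan is to realize the metric as a \emph{twisted product} of a hyperbolic metric on $\Sigma$ with a circle, where the twisting is recorded by a closed $1$-form on $\Sigma$ whose period around $\alpha_0$ we get to prescribe. I would first fix a hyperbolic metric $g_\Sigma$ on $\Sigma$ with totally geodesic boundary (one exists since $\chi(\Sigma)<0$), and let $\ell$ be the length of $\alpha_0$. Since $\Sigma$ has genus one and $\alpha_0$ is one of its two boundary curves, $[\alpha_0]\neq 0$ in $H_1(\Sigma;\mathbb R)$, so for the number $\theta:=\ell\cot\delta\ge 0$ there is a smooth closed $1$-form $\eta$ on $\Sigma$ with $\int_{\alpha_0}\eta=\theta$. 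Choosing any $L>0$ and writing $S^1=\mathbb R/L\mathbb Z$ with coordinate $t$, I would then equip $\Sigma\times S^1$ with the Riemannian metric
\[
g \;=\; g_\Sigma \;+\; \bigl(dt+\pi^*\eta\bigr)^2,
\]
where $\pi\colon\Sigma\times S^1\to\Sigma$ is the projection; since $dt+\pi^*\eta$ is an honest $1$-form on $\Sigma\times S^1$, this is a well defined positive definite metric.

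The first verification is that $g$ is locally $\CAT(0)$. Over any simply connected open set $U\subseteq\Sigma$ we may write $\eta|_U=df$, and the diffeomorphism $(x,t)\mapsto(x,t+f(x))$ carries the product metric $g_\Sigma+dt^2$ to $g$; hence $(U\times S^1,g)$ is isometric to an open subset of the metric product $(\Sigma,g_\Sigma)\times\mathbb R$. A metric product of a locally $\CAT(-1)$ surface with geodesic boundary and a line is locally $\CAT(0)$ with locally convex (flat) boundary, so $g$ is locally $\CAT(0)$.

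The second step is to identify the torus $W_0=\alpha_0\times\beta_0\subseteq\Sigma\times S^1$ together with the lifts of $\alpha_0$ and $\beta_0$. On the universal cover $\widetilde{\Sigma\times S^1}=\widetilde\Sigma\times\mathbb R$ the form $\pi^*\eta$ pulls back to an exact form $d\tilde h$, so in the coordinate $\tau=t+\tilde h$ the lifted metric is the genuine product $\tilde g_\Sigma+d\tau^2$. In these coordinates the deck group $\pi_1(\Sigma)\times\mathbb Z$ acts by $\gamma\cdot(\tilde x,\tau)=\bigl(\gamma\tilde x,\ \tau+\langle[\eta],\gamma\rangle\bigr)$ for $\gamma\in\pi_1(\Sigma)$ and by $(\tilde x,\tau)\mapsto(\tilde x,\tau+L)$ on the $\mathbb Z$-factor. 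Restricting to $\pi_1(W_0)=\langle[\alpha_0]\rangle\times\mathbb Z$: the isometry $[\alpha_0]$ of $\widetilde\Sigma$ is hyperbolic with axis a lift $\widetilde\alpha_0\cong\mathbb R$ of $\alpha_0$, so $\widetilde\alpha_0\times\mathbb R$ is a convex Euclidean plane on which $\pi_1(W_0)$ acts cocompactly; therefore $W_0$ is flat and $\widetilde W_0$ is this Euclidean plane, whose Tits boundary is the round circle of circumference $2\pi$. In coordinates $(s,\tau)$ on $\widetilde\alpha_0\times\mathbb R$, with $s$ arclength along $\widetilde\alpha_0$, the two generators of $\pi_1(W_0)$ act by $(s,\tau)\mapsto(s+\ell,\tau+\theta)$ and $(s,\tau)\mapsto(s,\tau+L)$, so $\widetilde W_0=\mathbb R^2$ with lattice $\langle(\ell,\theta),(0,L)\rangle$; a lift of $\alpha_0$ is the axis of the first generator, a line in the direction $(\ell,\theta)$, and a lift of $\beta_0$ is the axis of the $\mathbb Z$-generator, a line in the direction $(0,1)$.

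Finally, by the definition of the angle it equals the minimal Tits distance in $\partial_\infty\widetilde W_0$ between the two antipodal pairs of endpoints of these lines, which is the acute Euclidean angle between the directions $(\ell,\theta)$ and $(0,1)$, namely $\tfrac{\pi}{2}-\arctan(\theta/\ell)$; with $\theta=\ell\cot\delta$ this is exactly $\delta$. The hard part, I expect, is the bookkeeping of the twisted deck action --- showing that the axes of $[\alpha_0]$ and of the fiber class sit at the asserted slopes inside $\widetilde W_0$ --- together with the appeal to convexity of $\widetilde\alpha_0\times\mathbb R$ that makes $\widetilde W_0$ a genuine Euclidean plane, so that its Tits boundary is the round circle used in the definition; the remaining checks are routine.
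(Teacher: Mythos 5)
Your proposal is correct and is essentially the paper's construction viewed ``downstairs'': the twisted product metric $g_\Sigma+(dt+\pi^*\eta)^2$ is exactly the quotient of the product $\widetilde\Sigma\times\mathbb{R}$ by the deck action skewed along $\alpha_0$ by the homomorphism $\gamma\mapsto\langle[\eta],\gamma\rangle$, which is how the paper defines the metric directly. The only difference is that you spell out the identification of $\widetilde W_0$ with a Euclidean plane and the computation giving the angle $\tfrac{\pi}{2}-\arctan(\theta/\ell)=\delta$, which the paper leaves implicit.
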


\begin{proof}
Choose generators $a,b,c$ for $\pi_1(\Sigma)$ so that $c$ represents $\alpha_0$, and let $d$ be the generator of $\pi_1(S^1)$. Assume that $\Sigma$ has a hyperbolic metric; so, there is an discrete and faithful representation $\psi: \pi_1(\Sigma) \rightarrow \Isom^+(\mathbb{H}^2)$.  

Let $T$ be a nontrivial homomorphism from $\mathbb{Z} \rightarrow \rr$, and skew the standard product action of $\pi_1(\Sigma) \times \zz$ on $\mathbb{H}^2 \times \mathbb{R}$ by:
\begin{align*}
  a & : (x,y) \mapsto (\psi(a)x, y) \\
  b& : (x,y) \mapsto (\psi(b)x, y) \\
  c& : (x,y) \mapsto (\psi(c)x, T(c)y) \\
 d& : (x,y) \mapsto (x,y+1).
\end{align*}

Since we can choose any homomorphism $T$, the quotient of $\mathbb{H}^2 \times \rr$ by this skewed action defines a locally CAT(0) metric with a desired angle.
\end{proof}

If the angle between $\beta$ and $\alpha_0$ is $\pi/2$ (so that $T = 0$), then we say that $X$ is \emph{right-angled}. We will always choose simple closed curves on the $T_i$ and flat metrics so that the $T_i$ are glued onto $\Sigma \times S^1$ by isometries. The resulting complex is then locally CAT(0). 

\begin{Remark}
We were originally motivated by an example of Kapovich and Kleiner~\cite[Section 9]{kapovichkleiner}. Roughly speaking, their example is obtained by gluing together two graph manifolds with boundary along simple closed curves in the interior. They use coarse Alexander duality to show that their examples are not virtually 3-manifold groups. Their examples also have visual boundaries which are nonplanar, and the existence of nonplanar graphs depends on how the Seifert-fibered pieces are glued together. 

Having two boundary components in our example is a little artificial; there are similar constructions formed by starting with the product of a circle and a genus one surface with one boundary component, and then gluing on two tori along the boundary torus. The difference here is that one cannot skew the metric in the same way; any locally CAT(0) metric on this space restricts to the right-angled flat metric on the boundary torus. Therefore, to obtain different boundaries one has to vary the fundamental groups of these examples by gluing tori onto different simple closed curves on the boundary tori. One can show using work of  Kapovich--Leeb~\cite[Proposition 2.2]{kapovichleeb} that all these groups are quasi-isometric.
We wanted an example where this phenomena occurs for the same group, hence used two boundary components. 
\end{Remark}

\subsection{Structure of the universal cover of $X$}

Let $\pi: \widetilde X \rightarrow X$ denote the universal covering. A \emph{block} is a connected component of $\pi^{-1}(M)$ or $\pi^{-1}(Y_i)$. Each block is convex in $\widetilde X$. A \emph{sub-block} is a connected component of $\pi^{-1}(\Sigma \times S^1)$. A \emph{wall} is a connected component of $\pi^{-1}(W_i)$. Note that each wall is contained in exactly two blocks, and two blocks either intersect in a wall or are disjoint. In the first case we say that the blocks are \emph{adjacent}. 

Each block and sub-block is quasi-isometric to the product of a tree with $\rr$, so the boundary of each block and sub-block is homeomorphic to the suspension of a Cantor set. There is a natural homeomorphism from the boundary of $\widetilde \Sigma$ to the Cantor set that maps the endpoints of lifts of the curves $\alpha_i$ to endpoints of removed intervals. A \emph{pole} of a block boundary is one of the suspension points. A \emph{longitude} is an embedded arc in the block boundary connecting the two poles. If the longitude of a sub-block covering $\Sigma \times S^1$ contains a pole of an adjacent block in its interior, we say it is a \emph{boundary longitude}, and if a pair of such longitudes contains the two poles of an adjacent block, we say the longitudes form a \emph{boundary pair}. The boundary pairs are precisely the pairs of longitudes which contain the endpoints of lifts of the curves $\alpha_i$.

We will also need the notion of an itinerary for a geodesic ray in $X$. Choose a basepoint $p \in \widetilde X$ not contained in any wall. For a point $\psi$ in $\partial_\infty \widetilde X$, the \emph{$p$-itinerary of $\psi$}, denoted $\Itin(\psi)$, is the ordered sequence of blocks $\{B_i\}_{i = 1}^\infty$, where the geodesic ray $p\psi$ intersects $B_i$ in a point not contained in a wall. A boundary point has finite itinerary (for any basepoint $p$) if and only if it is contained in the boundary of some block.

\section{Paths in the boundary} \label{sec:paths}

In this section, we study the local path components of points in $\partial_\infty \widetilde X$ and show that they depend on whether $X$ is right-angled. In the next sections, we will show that this changes what possible finite graphs embed into $\partial_\infty \widetilde X$. The main difference between the cases comes from studying paths between points on different longitudes of $\partial_\infty \SB \subset \partial_\infty \widetilde X$, where $\SB$ is a sub-block covering $\Sigma \times S^1$. If $X$ is right-angled, it turns out that there is a path between points on any two longitudes that misses the two poles of $\partial_\infty \SB$ (see Lemma~\ref{l:paths}). If not, these paths only exist between points on boundary pairs. See Figure~\ref{figure:main_example}.

\subsection{Horizontal paths in $\partial_\infty \widetilde X$}

%We begin by studying the local path components of $\partial_\infty M_{pq})$ around poles of $T_{pq}$. We say a path $\rho: [-t,t] \rightarrow X$ in a space $X$ is \emph{locally contained} in a subspace $A \subset X$ if $\rho(0) = a \in A$ and there exists a neighborhood $U$ of $a$ in $X$ such that $\rho \subset U$ is contained in $A$. Otherwise, we say a path $\rho$ \emph{locally leaves} $A$. 

\begin{lemma}\label{l:paths}
Suppose that $X$ is right-angled, and $\SB$ is a sub-block of $\widetilde X$. Then there is a circle inside $\partial_\infty \widetilde X$ containing $\partial_\infty \widetilde \Sigma \subset \partial_\infty \SB$ that avoids the poles $\{\psi_+, \psi_-\}$ of $\partial_\infty \SB$. In particular, if $x$ and $y$ are two points in $\partial_\infty \SB$, then $x$ and $y$ are in the same path component of $\partial_\infty \widetilde X - \{\psi_+, \psi_-\}$.
\end{lemma}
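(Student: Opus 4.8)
The plan is to build the circle explicitly as a union of longitudes of $\partial_\infty \SB$ together with arcs passing through adjacent blocks. Recall that $\partial_\infty \SB$ is the suspension of a Cantor set $C$, and under the homeomorphism $\partial_\infty \widetilde\Sigma \cong C$ the endpoints of lifts of the curves $\alpha_i$ correspond to endpoints of the intervals removed in the standard construction of $C$. A longitude is an arc from $\psi_+$ to $\psi_-$, and the boundary pairs are exactly the pairs of longitudes spanning the endpoints of a removed interval; these are the longitudes that can be ``rerouted'' through an adjacent block $Y_i$ or $M$. Since $X$ is right-angled, the wall $W_i = \alpha_i \times \beta_i$ is glued at a right angle, so in the adjacent block the boundary of $\SB$ along $\alpha_i$ and the pole of the adjacent block span a right angle in the Tits metric; this is precisely the condition that lets us replace the subarc of $\partial_\infty\SB$ near a pole by an arc through the adjacent block that \emph{misses} the pole.

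The construction proceeds as follows. First I would fix one boundary pair of longitudes $\ell_+, \ell_-$ of $\SB$; together they form a circle $Z_0$ through both poles $\psi_+, \psi_-$ (essentially $\partial_\infty$ of the union of two flats in $\SB$). Next, for each pole, I would use right-angledness to find an arc in the adjacent block $Y_i$ connecting a point of $\ell_+$ near $\psi_+$ to a point of $\ell_-$ near $\psi_+$, and which avoids $\psi_+$: concretely, in $\partial_\infty\widetilde Y_i$ (itself a suspension of a Cantor set) one takes a longitude of $Y_i$ that goes ``around'' the pole $\psi_+$ of $\SB$, using that $\psi_+$ sits in the interior of some longitude of $\widetilde Y_i$ because of the right angle. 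Doing this at both poles and concatenating with the remaining portions of $\ell_+$ and $\ell_-$ produces an embedded circle $S \subset \partial_\infty\widetilde X$ disjoint from $\{\psi_+,\psi_-\}$. Then I would check that by choosing the boundary pair appropriately (or by enlarging the circle across countably many removed intervals of $C$ and taking a limit, using that $C$ is the closure of the interval-endpoints) the circle $S$ can be taken to contain all of $\partial_\infty\widetilde\Sigma$; the key point is that $\partial_\infty\widetilde\Sigma$ is already a subset of the circle $Z_0$ built from the two flats, so in fact the very first step already gives $\partial_\infty\widetilde\Sigma \subset S$ once the detours near the poles are taken small enough to avoid the (interior) points of $\partial_\infty\widetilde\Sigma$.

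For the ``in particular'' clause: any two points $x,y \in \partial_\infty\SB \setminus\{\psi_+,\psi_-\}$ each lie on some longitude of $\partial_\infty\SB$, and each longitude meets the circle $S$ (it contains $\partial_\infty\widetilde\Sigma = C$, which is dense among the longitude-endpoints), so it suffices to connect an arbitrary point on an arbitrary longitude to $S$ within $\partial_\infty\widetilde X \setminus \{\psi_+,\psi_-\}$; this is done by the same right-angled rerouting trick applied to that longitude's pole-neighborhoods, or more simply by observing that each longitude $\ell$ minus its poles is an arc, hence connected, and one of its interior points lies on $S$. The main obstacle I expect is the \emph{embeddedness} and \emph{avoidance of the poles} simultaneously: one must verify that the detour arcs through adjacent blocks are genuinely disjoint from $\psi_\pm$ and from each other, and that the Tits-geometry at the right-angled walls really does provide such arcs (this is where the hypothesis that $X$ is right-angled, rather than merely CAT(0), is essential — in the skewed case the pole of the adjacent block coincides with an endpoint forced onto every nearby longitude, blocking the detour). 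Making the ``small enough detour'' argument precise — i.e.\ that the detour can be confined to a neighborhood of the pole missing any prescribed point of $\partial_\infty\widetilde\Sigma$ — will require a careful look at the suspension structure and at how longitudes of adjacent blocks sit inside $\partial_\infty\widetilde X$.
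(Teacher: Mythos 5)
There is a genuine gap, and it comes from a misreading of the suspension structure of $\partial_\infty \SB$. Since $\SB \cong \widetilde{\Sigma}\times\rr$, its boundary is the suspension (join with $S^0$) of the Cantor set $\partial_\infty\widetilde{\Sigma}$: the Cantor set is the \emph{equator}, each longitude is a suspension arc meeting $\partial_\infty\widetilde{\Sigma}$ in exactly one point, and the poles are the endpoints of the $\rr$-factor. Consequently your base circle $Z_0=\ell_+\cup\ell_-$, built from one boundary pair of longitudes (this is just the wall circle $\partial_\infty \widetilde{W}_i$), contains exactly \emph{two} points of $\partial_\infty\widetilde{\Sigma}$, so the assertion that ``$\partial_\infty\widetilde\Sigma$ is already a subset of the circle $Z_0$'' is false, and no amount of local modification near the poles can repair it: the circle demanded by the lemma must pass through every point of the equatorial Cantor set and fill the countably many gaps (one for each lift of $\alpha_i$) by arcs through adjacent block boundaries. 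Your fallback (``enlarge across countably many removed intervals and take a limit'') is the right intuition but is exactly the part you defer, and it is where all the work lies. A second, independent problem is the ``small detour'': the points of $\ell_+$ and $\ell_-$ near $\psi_+$ lie on the \emph{same} longitude of $\partial_\infty Y$ (the wall longitude through $\psi_+$), separated by $\psi_+$; since $\partial_\infty Y$ minus its poles has the open longitudes as its path components, any path in $\partial_\infty Y$ avoiding $\psi_+$ must run through a pole of $Y$ (an endpoint of $\widetilde{\alpha}_i$, at Tits distance $\pi/2$ from $\psi_+$). So there is no detour confined to a small neighborhood of $\psi_+$, and right-angledness is not used where you invoke it.

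The paper's proof is different and avoids both issues: it takes a basepoint $p$ in a horizontal leaf $\widetilde{\Sigma}\subset \SB$ and maps the unit circle of directions in $T_p(\widetilde{\Sigma})$ to $\partial_\infty\widetilde X$. A direction whose horizontal geodesic stays in $\SB$ is sent to its endpoint (this sweeps out all of $\partial_\infty\widetilde{\Sigma}$); a direction whose geodesic exits through a wall is extended into one of the two half-flats of the adjacent block bounded by a lift of a closed curve in $T_i$, and right-angledness is what makes this concatenation a genuine geodesic ray. The image is an embedded ``horizontal circle'' containing $\partial_\infty\widetilde{\Sigma}$ and, since every ray involved is horizontal or extended non-vertically, it misses $\psi_\pm$. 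If you want to salvage your approach, you would in effect have to reconstruct this equatorial circle gap-by-gap, which is the limit argument you did not carry out.
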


\begin{proof}
Choose a basepoint $p$ in the interior of $\widetilde \Sigma$, for some lift $\widetilde{\Sigma}$ of $\Sigma$ in $\SB$, and choose a circle in the tangent space $T_p(\widetilde \Sigma) \subset T_p(\SB)$. We will define an embedding from this circle to $\partial_\infty \widetilde X$. For a vector $v \in T_p(\widetilde \Sigma)$, if the horizontal geodesic $\rho$ in the direction of $v$ stays in $\SB$ for all time, then map $v$ to the endpoint of that geodesic. Otherwise, suppose that $\rho$ hits a wall $W$ between $\SB$ and a block $Y$ in a geodesic line~$\ell$, where $\pi(Y) = Y_i$ for $i \in \{0,1\}$. The line $\ell$ is a lift of a closed curve in the torus $T_i$, and hence $\ell$ bounds two half-flats meeting $W$ in $\ell$. Given $\ell$, choose one of the two half-flats; this choice determines a way to extend $\rho$ to a geodesic ray $\rho'$ contained in $Y$. Map the vector $v$ to the endpoint of~$\rho'$. 
\end{proof}

We call such a circle in $\partial_\infty \widetilde X$ a \emph{horizontal circle} for the sub-block boundary $\partial_\infty \SB$. Note that a path connecting two points on $\partial_\infty \SB$ which lies on this horizontal circle will have infinite Tits length unless $x$ and $y$ are on the same longitude, or are on longitudes that form a boundary pair.

\subsection{Local path components in $\partial_\infty \widetilde X$ if $X$ is not right-angled}

\begin{Theorem}\label{l:paths5}
Suppose $X$ is not right-angled. 
Suppose that $\psi$ is a point on a block boundary $\p B$ that is not a pole of any other block. Then, there is a neighborhood $\Omega$ of $\psi$ in $\partial_\infty \widetilde X$ so that the path component of $\psi$ in $\Omega$ is contained in the block boundary $\p B$. Furthermore, this path component is contained in the set of longitudes of $\p B$ that $\psi$ lies on.
\end{Theorem}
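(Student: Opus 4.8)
The plan is to analyze paths issuing from $\psi$ by combining the itinerary machinery of Section~2 with the description of $\p B$ as a suspension of a Cantor set, and to locate the obstruction in the hypothesis that the angle $\delta$ between $\alpha_0$ and $\beta_0$ is strictly less than $\pi/2$ — exactly the hypothesis that fails in Lemma~\ref{l:paths}.

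\emph{Localizing via itineraries.} Fix a basepoint $p$. Since $\psi\in\p B$, the itinerary $\Itin(\psi)$ is finite and ends in $B$; because two geodesic rays that are close in $\p\widetilde X$ share a long initial segment of their itineraries, there is a neighborhood $\Omega_0$ of $\psi$ so that every $\phi\in\Omega_0$ has itinerary beginning with $\Itin(\psi)$, hence $p\phi$ passes through $B$ and $\phi$ lies in $\p B$, or in $\p B'$ for a block $B'$ reached from $B$ across a wall $W$ of $B$ with $\p W$ meeting $\Omega_0$, or in a block farther out in that direction. It therefore suffices to prove: for $\Omega\subset\Omega_0$ small enough, no path starting at $\psi$ inside $\Omega$ leaves $\p B$. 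The ``furthermore'' clause is then automatic, since a neighborhood of a non-pole point of $\p B$ inside $\p B$ is homeomorphic to (a clopen piece of a Cantor set)$\times$(an interval) — so its path component is contained in one longitude — while a neighborhood of a pole $\psi_\pm$ of $\p B$ inside $\p B$ is a cone on a Cantor set.

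\emph{The excursion analysis.} Suppose $\gamma\colon[0,1]\to\Omega$ is a path with $\gamma(0)=\psi$ that leaves $\p B$, and let $t^\ast$ be the last parameter with $\gamma(t^\ast)\in\p B$; just after $t^\ast$ the path enters $\p B_W$ for a block $B_W$ adjacent to $B$ along a wall $W$, and $\gamma(t^\ast)\in\p B\cap\p B_W=\p W$. Now use the two descriptions of the wall circle $\p W$: it is a boundary pair of $B$ and also a boundary pair of $B_W$, with the roles of ``pole'' and ``endpoint of a lift of $\alpha_i$'' interchanged between the two blocks. The sub-path $\gamma|_{[0,t^\ast]}$ lies in the suspension $\p B$; when $\psi$ is not a pole of $\p B$ we shrink $\Omega$ so that $\gamma|_{[0,t^\ast]}$ avoids the poles $\psi_\pm$, and then it stays on the unique longitude $\lambda$ through $\psi$, while the pole case is handled separately below. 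Since distinct longitudes of $\p B$ meet only at $\psi_\pm$, the relation $\gamma(t^\ast)\in\lambda\cap\p W$ forces $\lambda$ to be one of the two longitudes of this boundary pair; so the Cantor coordinate of $\psi$ is an endpoint of a removed interval, and as each point of the Cantor set is an endpoint of at most one removed interval, $W$ is uniquely determined by $\psi$. Translating to $B_W$, the point $\gamma(t^\ast)$ lies in the interior of a longitude $\mu$ of $B_W$ (again, not at a pole of $B_W$ after shrinking $\Omega$, by the hypothesis that $\psi$ is not a pole of any other block), and $\mu$ is one of the two longitudes of the boundary pair $\p W$ of $B_W$; hence $\mu\subset\p W\subset\p B$. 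Consequently, just after $t^\ast$ the path cannot stay on $\mu$, so it must leave $\p B_W$ as well — and running the same analysis inside $B_W$ shows that the only wall of $B_W$ along which $\gamma$ can escape is $W$ itself, which leads back to $\p B$. Thus the only way $\gamma$ can permanently leave $\p B$ is by passing through a pole of $B$, through a pole of $B_W$, or by sustaining an infinite sequence of ever-smaller ``bridge across a boundary pair'' excursions accumulating at some parameter.

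\emph{Closing the loop with $\delta<\pi/2$ — the main obstacle.} What remains is to rule out each of these inside a sufficiently small $\Omega$, and this is exactly where being non-right-angled is used; I expect this to be the technical heart of the write-up. When $\psi\ne\psi_\pm$ the two poles of $B$ are finitely many points distinct from $\psi$, hence excluded by a small $\Omega$. The poles of the blocks $B_W$ are the endpoints of lifts of the curves $\alpha_i$; when $\delta<\pi/2$ each such point sits at Tits-distance $\delta$ from one pole of the block in which it is interior to a longitude (rather than at the equator, as happens when $\delta=\pi/2$), so that — together with the hypothesis that $\psi$ is not one of them — it lies at positive distance from $\psi$, and one must check that this positivity is uniform over the walls $W$ whose circles meet $\Omega$, so that a small enough $\Omega$ excludes all of them. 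Finally, the infinite-excursion scenario is precisely the local version of the horizontal-circle construction of Lemma~\ref{l:paths}: in the right-angled case such a circle threads through all the accumulating adjacent block boundaries, but when $\delta<\pi/2$ a geodesic that crosses a wall is forced to pick up a definite amount of $\rr$-displacement, so every bridging arc ``drifts toward a pole,'' and an arc confined to a small visual neighborhood of $\psi$ cannot keep making such bridges; for the pole case $\psi=\psi_\pm$ the same drift estimate, run from the pole, shows directly that any arc leaving $\p B$ must exit $\Omega$. Making this drift estimate precise, and uniform over the recursion through arbitrarily deep blocks, is the crux; once it is in hand, choosing $\Omega$ small enough forces $\gamma\subset\p B$, the desired contradiction.
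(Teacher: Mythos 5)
Your outline contains a genuine gap precisely at the point where the non-right-angled hypothesis has to do its work. You reduce everything to a ``drift estimate'': that when $\delta<\pi/2$ every arc bridging across a wall is forced toward a pole, uniformly over arbitrarily deep excursions, and you explicitly defer its proof (``making this drift estimate precise \dots is the crux''). That is not a detail to be filled in later; it is the theorem. The paper never proves such a metric estimate. Instead it runs the Croke--Kleiner open/closed argument on exit data: for a small visual neighborhood $\Omega$ of $\psi$, the subsets of $\Omega$ consisting of rays that exit $B$ through a fixed lift $\widetilde\alpha_i$ (or through a fixed strip $W\cap H$ of the half-plane $H$ containing $p\psi$) are shown to be both open and closed in $\Omega$, so a path starting at $\psi$ cannot change its exit behavior; the non-right-angled hypothesis enters only once, in the half-plane (sub-block) case, where a path whose rays do not project to a single geodesic of $\widetilde\Sigma$ must contain points arbitrarily close to $p\psi$ landing on wall boundaries at points that are \emph{not} poles of adjacent blocks (exactly what fails when $\delta=\pi/2$), and the wall case of the same theorem then pins those points' local path components to a single longitude, a contradiction. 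No quantitative drift is needed.

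There are also structural errors in your excursion analysis. If $t^\ast$ is the last parameter with $\gamma(t^\ast)\in\p B$, the sub-path $\gamma|_{[0,t^\ast]}$ need not lie in $\p B$ (the path can leave and return before $t^\ast$), so you cannot conclude it stays on the longitude through $\psi$. More seriously, a path leaving $\p B$ does not have to enter the boundary of an adjacent block ``just after'' $t^\ast$: points with infinite itinerary, and points in far-away block boundaries, accumulate on $\p B$, so the block-to-adjacent-block bookkeeping on which your recursion rests is unjustified. This is why the paper's argument constrains \emph{all} points of $\Omega$ by which wall or strip their rays cross, rather than tracking which block boundary the path currently occupies. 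Your localization-by-itineraries step and the ``furthermore'' reduction are fine, but without the open/closed exit-set mechanism (or a genuine proof of your drift claim) the proof does not go through.
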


\begin{proof}

Our argument is the same as in \cite[Lemma 4]{crokekleiner}, though we have to additionally argue that the horizontal paths constructed in Lemma \ref{l:paths} cannot exist when $X$ is not right-angled. 

\noindent \textbf{Case $1$) $\psi$ is a pole of $\partial_\infty B$ where $B$ covers $M$.} Choose a basepoint $p$ in $B$ that is not contained in a wall, and suppose the minimum Tits angle based at $p$ between $\psi$ and the poles of adjacent blocks is equal to $\epsilon > 0$. Let $$\gO = \{\psi' \in \partial_\infty \widetilde X \,|\, \angle_p(\psi, \psi') < \epsilon/2\}.$$ For $i = 0$ or $1$, choose a lift $\widetilde \alpha_i$ of $\alpha_i$ in $B$, and let $\gO_{\widetilde \alpha_i}$ denote the geodesics in $\gO$ that exit from this lift. We claim that these sets are open and closed in~$\gO$. 
\vspace{1mm}

\noindent \emph{Open:} If $\psi'$ in $\gO_{\widetilde \alpha_i}$, note that any sufficiently close ray $\psi''$ to $\psi'$ must exit $B$ at a point close to where $\psi'$ exits. Since the lifts of $\alpha_i$ in $B$ are discrete, $\psi''$ and $\psi$ must leave from the same lift.
\vspace{1mm}

\noindent \emph{Closed:} Let $E$ denote the set of exit points in $\widetilde \alpha_i$ for elements of $\gO_{\widetilde \alpha_i}$. 
Then $E$ is bounded, for otherwise we could find a sequence of exit points on $\widetilde \alpha_i$ diverging from $p$, so we get a limit geodesic originating at $p$ and ending at $\psi'$ in $\partial_\infty \widetilde \alpha_i$. 
However, this point is a pole of an adjacent block, and hence the Tits angle at $p$ is $\ge \epsilon$, contradiction. 
Therefore, the set $E$ is bounded. 

Now, suppose we have a sequence $\psi_k' \rightarrow \psi'$, where $\psi_k' \in \gO_{\widetilde \alpha_i}$ and $\psi' \in \gO$. After passing to a subsequence, we can assume the geodesic segments $\bar{pe_k}$ converge to a segment $\bar{pe_\infty}$. Therefore, the geodesic $p\psi'$ exits $B$ through $\bar{pe_\infty}$, and is therefore in $\gO_{\widetilde \alpha_i}$. 

It follows that the path component of $\psi$ in $\gO$ consists of geodesics which never leave through lifts of the $\alpha_i$, since any subset $C \subset \gO$ containing $\psi$ and intersecting $\gO_{\widetilde \alpha_i}$ for some lift admits a separation into open subsets of $C$, and any $\psi' \in \gO - \partial_\infty B$ lies in $\gO_{\widetilde \alpha_i}$ for some lift.

\noindent \textbf{Case 2) $\psi$ is contained in the boundary of a wall and not a pole.}  A similar argument to the above shows that a path of geodesics starting at $\psi$ does not exit the sub-block containing $W$ through a lift of $\beta$ or $\alpha_i$. Therefore, the path component in $\gO$ is contained in the boundary of that sub-block. Since the boundary of this sub-block is homeomorphic to a suspension of a Cantor set, the path component in $\gO$ is contained in $\partial_\infty W$. 

\noindent \textbf{Case 3) $\psi$ is not in the boundary of an adjacent block.} Choose a basepoint $p$ in $B$ and not contained in a wall. Let $H \subset B$ be the halfplane in $\widetilde X$ containing the ray $p\psi$ and the vertical geodesic through $p$ between the poles of $B$.  For any wall $W$, the space $W \cap H$ is either empty, a vertical geodesic, or a flat strip bounded by vertical geodesics. After removing these subsets, we get either an infinite collection of open strips or a finite collection of strips and an open half plane. The second case occurs only for geodesics in the boundary of sub-blocks.
First suppose there is no half-plane. 
If $S$ is such a strip, then let $\gO_S$ be the points in $\partial_\infty X$ which intersect $S$. Again, this set is closed and open in $\gO$. Therefore, since $p\psi$ intersects all strips, each point in a path starting at $p$ must intersect them all as well. Therefore, the path is contained in $\partial_\infty H$. 

Now, suppose there is a half-plane.  Let $SB$ be the sub-block covering $\Sigma \times S^1$ which contains this half plane, and let $\rho$ be a path starting at $\psi$. For each geodesic ray $p\psi'$ for $\psi'$ in $\rho$, project its intersection with $\SB$ to the horizontal subsurface $\widetilde \Sigma$. If this projection is a single geodesic in $\widetilde \Sigma$ we are done. If not, then this path contains geodesic rays arbitrarily close to $p\psi$ whose endpoints are contained in the boundaries of walls and are not poles of adjacent blocks (note this fails if $X$ is right-angled). By Case $2$, the path components of these points in $\gO$ are contained on a single longitude, which is a contradiction.

\noindent \textbf{Case $4$) $\psi$ is a pole of $\partial_\infty Y$ where $Y$ covers $Y_i$}. Choose a basepoint $p$ in a lift $\widetilde \alpha_i$ of $\alpha_i$, so that $\psi$ is one of the endpoints of $\widetilde \alpha_i$. The argument in Case~$1$ above shows that the geodesics in any path starting at $\psi$ cannot exit $Y$ through a lift of $\beta$. The argument in the second half of Case $3$ rules out geodesics in this path exiting $Y$ into an adjacent sub-block through a wall. 
Therefore, the local path component around $\psi$ is contained in $\partial_\infty Y$. 
\end{proof}

We now use the analysis of local path components in the non right-angled case to work out the path components of $\partial_\infty \widetilde X - \{\psi_+, \psi_-\}$, where $\psi_+$ and $\psi_-$ are poles of a block boundary. 

\begin{lemma}\label{l:paths9}	
Suppose that $X$ is not right-angled and suppose that $Y$ is a block which covers $Y_0$ or $Y_1$. Let $\psi_+$ and $\psi_-$ be the poles of $\partial_\infty Y$, and suppose that $x$ and $y$ are points in $\partial_\infty Y - \{\psi_+, \psi_-\}$. Then $x$ and $y$ are in the same path component of $\partial_\infty \widetilde X - \{\psi_+, \psi_-\}$ if and only if they lie on the boundary of an adjacent sub-block $\SB$ or are contained in the same longitude. 
\end{lemma}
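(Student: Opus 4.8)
The plan is to split into the two implications. For the "if" direction, if $x$ and $y$ lie on the same longitude of $\partial_\infty Y$, then that longitude is an arc in $\partial_\infty \widetilde X$ connecting them, and (since $\psi_+,\psi_-$ are the endpoints of the longitude, hence not interior points) it avoids the two poles; so $x$ and $y$ are in the same path component of $\partial_\infty\widetilde X - \{\psi_+,\psi_-\}$. If instead $x$ and $y$ lie on the boundary of an adjacent sub-block $\SB$ covering $\Sigma\times S^1$, then $\partial_\infty\SB$ is a suspension of a Cantor set, $x$ and $y$ lie on some longitudes of $\partial_\infty\SB$, and I want a path between them in $\partial_\infty\SB$ avoiding $\psi_\pm$. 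Here I need to be slightly careful: since $X$ is not right-angled, Lemma~\ref{l:paths} does not apply directly, and a generic pair of longitudes of $\partial_\infty\SB$ cannot be joined by a finite-length horizontal path. But $x,y\in\partial_\infty Y$ means the geodesics from a basepoint to $x$ and $y$ pass through the wall $W = Y\cap\SB$, so $x$ and $y$ actually lie on the two longitudes of $\partial_\infty\SB$ that form the boundary pair associated to $\ell = W\cap\SB$ lifted from $\alpha_i$ — and by the remark after Lemma~\ref{l:paths}, a horizontal path along these boundary-pair longitudes has finite length. Such a path stays in $\partial_\infty\SB$, hence avoids $\psi_\pm$ (which are poles of $\partial_\infty Y$, not of $\partial_\infty\SB$, and in particular not points of $\partial_\infty\SB$). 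This establishes sufficiency.

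For the "only if" direction I argue by contradiction: suppose $x,y\in\partial_\infty Y - \{\psi_+,\psi_-\}$ do not lie on a common longitude of $\partial_\infty Y$ and do not lie together on the boundary of a single adjacent sub-block, yet there is a path $\rho$ from $x$ to $y$ in $\partial_\infty\widetilde X - \{\psi_+,\psi_-\}$. The strategy is to follow $\rho$ from $x$ and show it cannot escape $\partial_\infty Y$, then derive a contradiction from the within-$\partial_\infty Y$ structure. The main tool is Theorem~\ref{l:paths5}: at any point $\psi'\in\rho$ that lies on a block boundary and is not a pole of another block, the local path component is contained in that block boundary and in fact on the longitudes of that block through $\psi'$. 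So as $\rho$ moves inside $\partial_\infty Y$, it stays on a fixed longitude of $\partial_\infty Y$ until it reaches a point that is a pole of an adjacent block, i.e. an endpoint of a lift $\widetilde\alpha_i$ — equivalently an endpoint of the boundary-pair longitudes of an adjacent sub-block $\SB$. At such a crossing point the path can pass from $\partial_\infty Y$ into $\partial_\infty\SB$. Once in $\partial_\infty\SB$, Theorem~\ref{l:paths5} (Cases 2 and 3) pins $\rho$ to a single longitude of $\partial_\infty\SB$ until it again hits a pole of an adjacent block, which is again an endpoint of a lift of some $\alpha_j$; from there it can return to $\partial_\infty Y$ or move to another sub-block. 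The key combinatorial observation is that the poles $\psi_+,\psi_-$ of $\partial_\infty Y$ are exactly the two points through which $\rho$ would have to pass to get from one "side" of $\partial_\infty Y$ to another, so a path that avoids $\psi_\pm$ and stays within one block-and-adjacent-sub-block neighborhood cannot connect two longitudes of $\partial_\infty Y$ that are not a boundary pair.

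I expect the hard part to be making this last combinatorial/topological argument precise: one must track how $\rho$ can migrate between $\partial_\infty Y$ and the boundaries of various adjacent sub-blocks (and further blocks beyond them), and argue that each "hop" is forced through a pole of an adjacent block, and then that reaching $x$ and $y$ on incompatible longitudes while never hitting $\psi_\pm$ is impossible. A clean way to organize this is to use the $p$-itinerary: fix $p\in\widetilde\Sigma\subset\SB$ (or in $Y$), and observe that for $\psi'$ in a small neighborhood of the relevant part of $\partial_\infty Y$, the itinerary of $\psi'$ begins with $Y$ (or with $Y,\SB$), and Theorem~\ref{l:paths5} forces the itinerary to be locally constant along $\rho$ except at the discrete set of poles of adjacent blocks. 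The structure of $\partial_\infty Y$ as a suspension of a Cantor set — where the two poles separate every pair of distinct longitudes that do not form a boundary pair — then yields the contradiction. If $x$ and $y$ lie on the same longitude of $\partial_\infty Y$, or on a boundary pair (hence on the boundary of a common adjacent sub-block), no contradiction arises, which is exactly the stated dichotomy.
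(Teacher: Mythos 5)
Your ``if'' direction rests on a false claim: the poles $\psi_+,\psi_-$ of $\partial_\infty Y$ \emph{are} points of $\partial_\infty \SB$. They are the endpoints of the lift $\widetilde\alpha_i$ lying in the wall $W=Y\cap\SB$, so they sit in $\partial_\infty W\subset\partial_\infty\SB$ as interior points of the two boundary-pair longitudes of $\partial_\infty\SB$. Hence ``such a path stays in $\partial_\infty\SB$, hence avoids $\psi_\pm$'' does not follow, and the specific path you propose (running along the boundary-pair longitudes, i.e.\ inside the circle $\partial_\infty W$) can be forced through a pole of $Y$: the four points $\psi_\pm$ and the poles $\phi_\pm$ of $\partial_\infty\SB$ alternate on $\partial_\infty W$, so if $x$ and $y$ lie in different components of $\partial_\infty W-\{\psi_+,\psi_-\}$ no path inside that circle works. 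The conclusion is still true, but the path must leave $\partial_\infty W$: go from $x$ along its longitude to a pole $\phi_\pm$ of $\partial_\infty\SB$, cross to the other pole along a longitude of $\partial_\infty\SB$ not contained in $\partial_\infty W$ (such a longitude misses $\psi_\pm$, since each non-pole point of the suspension lies on a unique longitude), and then descend to $y$.

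The more serious issue is the ``only if'' direction, where you explicitly defer ``the hard part.'' Your use of Theorem~\ref{l:paths5} to pin the path to longitudes away from poles of adjacent blocks matches the paper, and it does handle the case in which $x$ or $y$ lies on a longitude of $\partial_\infty Y$ containing no pole of an adjacent block. But in the remaining case, where $x$ and $y$ lie in the boundaries of two \emph{distinct} adjacent sub-blocks $\SB_1\neq\SB_2$, the observation that the two poles separate distinct longitudes concerns only the internal suspension structure of $\partial_\infty Y$; it does not exclude a path that exits through $\partial_\infty\SB_1$, travels through boundaries of blocks far from $Y$ (or through points with infinite itinerary), and re-enters near $y$ without ever meeting $\psi_\pm$. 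This is exactly where the paper's proof supplies the missing idea: $\pi_1(X)$ splits as an amalgam over $\langle\alpha_i\rangle$, the sub-blocks $\SB_1$ and $\SB_2$ correspond to different conjugates of $\pi_1(M)$ and are separated in $Y$ by a lift of $\alpha_i$ whose endpoints at infinity are precisely $\{\psi_+,\psi_-\}$, so any boundary path from $\partial_\infty\SB_1$ to $\partial_\infty\SB_2$ must cross $\psi_+$ or $\psi_-$. Without some separation statement of this kind (a cut-pair argument for $\{\psi_+,\psi_-\}$ in $\partial_\infty\widetilde X$ tied to the splitting), your itinerary bookkeeping does not close the case that carries the content of the lemma, so the proposal as written has a genuine gap.
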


\begin{proof}

%First, suppose that $x$ and $y$ are contained on a block boundary of an adjacent sub-block. In this case there is a path from $x$ and $y$ to the poles of that block, and then a different longitude connects the two poles without touching $\{\psi_-, \psi_+\}$. 
Suppose that $x,y \in \p \widetilde{Y}$ are not contained in a single longitude or the boundary of an adjacent block. Suppose an arc $\rho$ starts at $x$ and ends at $y$. We must show that $\rho$ intersects $\psi_+$ or $\psi_-$. 
If $x$ or $y$ is not contained in the boundary of a wall, then the longitude containing $x$ or $y$ has no poles on it, which by Theorem~\ref{l:paths5} implies the path $\rho$ intersects $\psi_+$ or $\psi_-$. Therefore, we can assume that $x$ and $y$ are contained in two different sub-block boundaries $\SB_1$ and $\SB_2$. The group $G$ splits as $\pi_1(M) \ast_{\langle \alpha_i \rangle} T_i$, and the two sub-blocks $\SB_1$ and $\SB_2$ correspond to subgroups in different conjugates of $\pi_1(M)$. %\notakevin{Need to prove}
The $\SB_i$ are separated by a lift of $\alpha_i$ in $Y$, which implies that there is no path connecting $x$ and $y$ which misses $\psi_+$ or $\psi_-$. %\nota{Maybe say this as a lemma earlier? I'm sure it follows from papasoglu or swenson}
\end{proof}

\begin{lemma}\label{l:paths7}
Suppose that $X$ is not right-angled and suppose that $\{\psi_+, \psi_-\}$ are the poles of a block $B$ which covers $M$. Suppose $x$ and $y$ are two points in $\partial_\infty B - \{\psi_+,\psi_-\}$. Then $x$ and $y$ are in different path components of the space $\partial_\infty \widetilde X - \{\psi_+, \psi_-\}$ if and only if they do not lie on longitudes which form a boundary pair or are contained in the same longitude. 
\end{lemma}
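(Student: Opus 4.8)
The plan is to mirror the structure of Lemma~\ref{l:paths9}, but now the positive direction is more subtle because, unlike a $Y$-block, an $M$-block $B$ meets many sub-blocks, and boundary pairs of longitudes actually \emph{do} get connected through a sub-block. So I would split the statement into the "if" (these points are connected) and "only if" (everything else is separated) directions.

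\textbf{Connectivity of boundary pairs.} Suppose first that $x$ and $y$ lie on longitudes $L_x, L_y$ of $\p B$ that form a boundary pair. By definition of a boundary pair, there is an adjacent sub-block $\SB$ (covering $\Sigma\times S^1$) whose two poles $\sigma_+,\sigma_-$ lie in the interiors of $L_x$ and $L_y$ respectively, and in fact $\p B \cap \p \SB$ contains arcs of $L_x$ and $L_y$ near those poles. I would argue that the horizontal-circle construction does not apply here (since $X$ is not right-angled), but a direct path \emph{within} $\p\SB$ connects a point of $L_x$ near $\sigma_+$ to a point of $L_y$ near $\sigma_-$: $\p\SB$ is a suspension of a Cantor set, and a longitude of $\p\SB$ running between its two poles passes through both $\sigma_+$ and $\sigma_-$; one then pushes the endpoints along $L_x$, resp. $L_y$, which are disjoint from $\{\psi_+,\psi_-\}$. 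Concatenating the arc along $L_x$ from $x$ to this point, the arc through $\p\SB$, and the arc along $L_y$, one gets a path from $x$ to $y$ in $\p\wtX - \{\psi_+,\psi_-\}$. The same argument trivially handles the case where $x$ and $y$ are on a single longitude.

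\textbf{Separation otherwise.} For the converse, suppose $x,y \in \p B - \{\psi_+,\psi_-\}$ lie neither on a common longitude nor on a boundary pair, and let $\rho$ be an arc from $x$ to $y$; I must show $\rho$ hits $\psi_+$ or $\psi_-$. If either $x$ or $y$ lies on a longitude of $\p B$ carrying no pole of an adjacent block — in particular if it is not in the boundary of a wall — then Theorem~\ref{l:paths5} says the local path component of that point stays on that single longitude, forcing $\rho$ through $\psi_\pm$. So we may assume $x \in \p\SB_1$, $y\in\p\SB_2$ for adjacent sub-blocks $\SB_1,\SB_2$ of $B$, with $x,y$ on longitudes through poles $\sigma_i^\pm$ of the $\SB_i$. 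If $\SB_1=\SB_2=\SB$: the two poles of $\SB$ lie on longitudes of $\p B$ forming a boundary pair, and since $x,y$ are not on a boundary pair, at least one of them — say $x$ — lies on a longitude of $\p B$ through only one of the two poles of $\SB$ and on which the \emph{other} endpoint-type is a pole $\psi_\pm$ of $B$; more carefully, a longitude of $\p\SB$ and a longitude of $\p B$ meet in a single point unless they coincide near a pole, so a path leaving $x$ along $\p\SB$ that wants to reach the other sub-block longitude must pass through $\sigma^+$ or $\sigma^-$, but these are poles of the adjacent block $\SB$ and the argument of Case~2/Case~3 of Theorem~\ref{l:paths5} shows $\rho$ cannot cross them without going through $\psi_\pm$. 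If $\SB_1\ne\SB_2$: in the splitting $G = \pi_1(M)\ast_{\langle\alpha_i\rangle}T_i$ the two sub-blocks lie in the same $\pi_1(M)$-conjugate but are separated inside $B$ by a vertical geodesic line (a lift of $\beta_0$, the core of $T_2$), whose two endpoints are exactly $\psi_+,\psi_-$; hence any arc from $\SB_1$ to $\SB_2$ within $\p\wtX - \{\psi_+,\psi_-\}$ would have to cross $\p\SB_1$ back into $B$ and then into $\SB_2$, and the Case~3 half-plane argument of Theorem~\ref{l:paths5} rules this out.

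\textbf{Main obstacle.} The delicate point is the intra-sub-block case $\SB_1=\SB_2$: I need a clean statement of how a longitude of $\p B$ and a longitude of $\p\SB$ intersect — namely that two distinct longitudes of $\p\SB$ meet only at the poles of $\p\SB$, so that a path within $\p\SB$ joining $x$'s longitude to $y$'s longitude is forced through a pole $\sigma^\pm$ of $\SB$, which is a pole of an adjacent block of $B$ and therefore blocked by Theorem~\ref{l:paths5}. Making this honest requires carefully identifying which longitudes of $\p B$ pass through which poles of which adjacent sub-blocks — i.e.\ reading off the combinatorics of the boundary-pair structure from the gluing — and checking that "$x,y$ not on a boundary pair" genuinely forces the path through $\sigma^\pm$ rather than around it. I expect this to be where the real content lies; the $\SB_1 \ne \SB_2$ case and the no-wall case are routine given Theorem~\ref{l:paths5}.
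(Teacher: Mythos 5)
Your proposal is built on a misidentification that then produces a real gap at exactly the step you flag as the ``main obstacle.'' The poles you call $\sigma_\pm$ do not exist where you place them: every sub-block $\SB$ of an $M$-block $B$ shares the vertical ($S^1$, i.e.\ $\beta$-direction) of $B$, so the poles of $\partial_\infty \SB$ \emph{are} the poles $\psi_\pm$ of $\partial_\infty B$ itself --- they are the common endpoints of all longitudes, never interior points of boundary longitudes, and there is no ``adjacent sub-block'' to $B$ in the paper's sense. The points sitting in the interiors of boundary longitudes, which define boundary pairs, are the poles $\psi'_\pm$ of the \emph{adjacent blocks} covering $Y_i$, i.e.\ the endpoints of lifts of $\alpha_i$. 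Consequently your ``if'' direction, which routes the connecting path through a sub-block boundary via $\sigma_\pm$, is incorrect as written; the correct (easy) route goes from $x$ along its longitude to $\psi'_+$, across $\partial_\infty Y$ along a longitude of the adjacent $Y$-block that avoids $\psi_\pm$, then through $\psi'_-$ to $y$.

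The more serious gap is in the separation direction. You assert that an arc reaching a pole of an adjacent block is ``blocked by Theorem~\ref{l:paths5}.'' It is not: that theorem is stated only for points that are not poles of any other block, and at $\psi'_+$ the local path component is genuinely larger --- the arc may legitimately cross into $\partial_\infty Y$ there. Handling this possibility is the entire content of the hard case, and the paper closes it with a cut-pair argument you do not have: $\{\psi'_+,\psi'_-\}$ is a cut pair of $\partial_\infty \widetilde X$ by Lemma~\ref{l:paths9}, while $x$ and $y$ lie in the same component of $\partial_\infty\widetilde X-\{\psi'_+,\psi'_-\}$; hence an embedded arc entering $\partial_\infty Y$ through $\psi'_+$ must leave through $\psi'_-$, after which it is confined to the partner boundary longitude and, by Theorem~\ref{l:paths5}, must run into $\psi_+$ or $\psi_-$. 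Similarly, in your $\SB_1\neq\SB_2$ case, the fact that lifts of $\beta_0$ separate the sub-blocks inside $B$ does not by itself control a boundary arc that exits $\partial_\infty B$ through adjacent block boundaries; again it is the Lemma~\ref{l:paths9} cut pairs that do this work. (Note also that the paper treats the connectivity of boundary pairs as immediate and only argues the separation direction, so the substance you are missing is precisely this trapping argument.)
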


\begin{proof}
%If $x$ and $y$ lie on longitudes which form a boundary pair, then there are arcs from $x$ and $y$ to the poles of an adjacent block covering $Y_0$ or $Y_1$. A different longitude of that block then connects the two poles and misses $\{\psi_+, \psi_-\}$. 

Suppose $L_1$ and $L_2$ are longitudes that do not form a boundary pair, and that there is an arc $\rho$ which starts at $x \in L_1$ and ends at $y \in L_2$.  We must show that $\rho$ intersects $\psi_+$ or $\psi_-$. This is certainly true if neither of the $L_i$ are boundary longitudes by Theorem~\ref{l:paths5}, as there are no poles of adjacent blocks on these longitudes.  So, we can assume that both of the points lie on boundary longitudes for different lifts of the $\alpha_i$.

If $\rho$ does not intersect $\psi_+$ or $\psi_-$, then $\rho$ must intersect a pole $\psi'_+$ of an adjacent block boundary $\partial_\infty Y$ covering $Y_i$ for $i \in \{0,1\}$. 
If $\rho$ continues in $\partial_\infty B$, it intersects $\psi_+$ or $\psi_-$, so we assume the path continues into $\partial_\infty Y$.  
If $\psi'_-$ is the other pole of $\partial_\infty Y$, then $\{\psi'_+,\psi'_-\}$ is a cut pair for $\partial_\infty\widetilde X$ as in Lemma~\ref{l:paths9}. Since $\rho$ exits $\partial_\infty B$, it enters a different path component of $\partial_\infty\widetilde X - \{\psi'_+,\psi'_-\}$ by Lemma~\ref{l:paths9}. Hence, $\rho$ intersects $\psi'_-$, since $x$ and $y$ are in the same component of $\partial_\infty\widetilde X - \{\psi'_+,\psi'_-\}$. Now, $\rho$ must continue in $\partial_\infty B$, since otherwise by repeating this argument $\rho$ would intersect $\psi'_+$ and not be an embedded path. However, then $\rho$ intersects one of the poles of $\partial_\infty B$ by Theorem~\ref{l:paths5}.
\end{proof}

\section{Non-planarity}\label{s:np}

\begin{Theorem} \label{theorem_nonplanar}
For any locally CAT(0) metric on $X$, the visual boundary $\partial_\infty \widetilde X$ is nonplanar. %\notakevin{I think this is right, but am not sure. I also want to say this more generally a QI invariant for G.}
\end{Theorem}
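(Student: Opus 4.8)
\noindent\emph{Proof plan.}
The plan is to treat separately the cases where $X$ is right-angled and where it is not, since the way planarity fails differs in the two cases.

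\medskip
\noindent\textbf{The right-angled case.}
Here I would exhibit an explicit subspace of $\p\widetilde X$ homeomorphic to $K_{3,3}$; as no subset of $S^2$ contains a topological $K_{3,3}$, this gives nonplanarity. Fix a sub-block $\SB$ covering $\Sigma\times S^1$, with poles $\psi_\pm$, inside a block $B$ covering $M$, and let $\mathcal C$ be a horizontal circle for $\p\SB$ as in Lemma~\ref{l:paths}, so $\mathcal C\supset\p\widetilde\Sigma$ and $\psi_\pm\notin\mathcal C$. For a wall of $\SB$ lying over $\alpha_i$, the arc of $\mathcal C$ filling the gap of $\widetilde\alpha_i$ is a longitude of the adjacent block over $Y_i$ joining its two poles, and in the right-angled case those poles are the endpoints of the geodesic $\widetilde\alpha_i$, hence points of $\p\widetilde\Sigma\subset\mathcal C$. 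Pick a wall over $\alpha_0$ with adjacent block $Y$, and a wall over $\alpha_1$ with adjacent block $Y'$, and write $b,y$ for the poles of $Y$ and $c,z$ for the poles of $Y'$. Since $\widetilde\alpha_0$ and $\widetilde\alpha_1$ are disjoint arcs in $\widetilde\Sigma$, these four points lie on $\mathcal C$ in the cyclic order $b,y,c,z$, so the four arcs of $\mathcal C$ between consecutive ones realize the edges $by$ (a longitude of $Y$), $cz$ (a longitude of $Y'$), $cy$, $bz$. Adjoining the four half-longitudes of $\SB$ from $\psi_+$ to $y$, from $\psi_+$ to $z$, from $\psi_-$ to $b$, and from $\psi_-$ to $c$ (realizing $ay,az,bx,cx$ with $a=\psi_+$, $x=\psi_-$), together with an arc from $\psi_+$ to $\psi_-$ in the boundary circle of a flat of $B$ covering $T_2$ glued to $\SB$ along a vertical lift of $\beta_0$ (realizing $ax$, and meeting none of the other arcs except at $\psi_\pm$), a short check shows the nine arcs meet only in the prescribed vertices, giving a $K_{3,3}$ with bipartition $\{a,b,c\},\{x,y,z\}$ as in Figure~\ref{figure:main_example}.

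\medskip
\noindent\textbf{The non-right-angled case.}
Now $\p\widetilde X$ contains no topological $K_{3,3}$ or $K_5$ (part of Theorem~\ref{t:main}), so there is no finite-graph obstruction, and I would instead rule out directly any embedding $\p\widetilde X\hookrightarrow S^2$ using the cut-pair analysis of Section~\ref{sec:paths}. For a block $B$ over $M$ with poles $\psi_\pm$, Lemma~\ref{l:paths7} describes the components of $\p\widetilde X\setminus\{\psi_+,\psi_-\}$: one per longitude of $\p B$ outside a boundary pair, and one per boundary pair, the latter also carrying the boundary of an adjacent block $Y$ over some $Y_i$; by Lemma~\ref{l:paths9} the poles $\chi_\pm$ of such a $Y$ form a cut pair whose components are the boundaries of adjacent sub-blocks together with the remaining longitudes of $\p Y$. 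An embedding into $S^2$ carries a cut pair to two points and so cyclically orders the complementary components, compatibly with the cyclic order of longitudes on the block boundary; and on the wall circle $\p W\subset\p B\cap\p Y$ the four points $\psi_+,\chi_+,\psi_-,\chi_-$ occur in interleaved cyclic order, so the two cut pairs ``link''. The plan is to fix a configuration---a block $B$ over $M$, two adjacent blocks $Y,Y'$ over $Y_0,Y_1$, and a sub-block adjacent to both---and, using the path-component statements of Lemmas~\ref{l:paths5}, \ref{l:paths7} and~\ref{l:paths9}, produce two boundary points joined by an arc in $\p\widetilde X$ whose existence, together with the cyclic-order constraints imposed by $S^2$ and transported along the relevant wall circles, forces the arc through one of the poles---a contradiction.

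\medskip
\noindent\textbf{Where the difficulty lies.}
The non-right-angled case is the crux. In the right-angled case nonplanarity is witnessed by a finite graph inside a bounded piece of $\p\widetilde X$, whereas in the non-right-angled case no finite subgraph is nonplanar, so the obstruction is genuinely infinitary: the contradiction must be built by propagating, along the infinite tree of blocks, the local cyclic orders forced by a hypothetical planar embedding, keeping track of how the nested pole pairs $\{\psi_+,\psi_-\},\{\chi_+,\chi_-\},\dots$ link across successive wall circles. Making this linking precise and showing it cannot be realized globally in $S^2$ is the main work.
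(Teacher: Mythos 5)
There is a genuine gap: your proposal only proves the right-angled case (your $K_{3,3}$ construction is essentially Theorem~\ref{t:K33}, which the paper states separately and which is fine), while for the non-right-angled case you give a plan rather than a proof. You concede this yourself (``Making this linking precise and showing it cannot be realized globally in $S^2$ is the main work''), but that is exactly the content of the theorem in the hard case: since by Theorem~\ref{t:planar} no finite subgraph of $\p \widetilde X$ is nonplanar when $X$ is not right-angled, the obstruction must come from an infinitary argument, and ``propagate cyclic orders along the tree of blocks, keeping track of linking of pole pairs'' is not yet an argument --- you never specify the configuration, the arc, or the step that forces the arc through a pole, and it is not evident that the cut-pair lemmas alone (which only constrain path components, not cyclic orders on $S^2$) suffice to run such an induction.

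For comparison, the paper's proof is uniform in the metric and does not use the Section~\ref{sec:paths} lemmas at all. Fix a sub-block $\SB$ with poles $\psi_\pm$ inside the block $B$ covering $M$. Each pair of poles of a block adjacent to $\SB$, joined to $\psi_\pm$ by longitudes of $\p \SB$, gives a circle in $\p \SB$; if some such circle failed to bound a disc in $S^2 - h(\p \SB)$, two longitudes of $\p \SB$ would separate the adjacent block's own pole-to-pole longitude, which must however avoid them --- so every such circle bounds a disc in the complement. Capping all these circles with discs turns $\p \SB$ (a suspension of a Cantor set, with the adjacent-block pole pairs sitting at the endpoints of the deleted intervals) into a $2$-sphere, and a longitude $\ell$ of $\p B$ not contained in $\p \SB$ (coming from the $T_2$-flats glued along $\beta_0$) would then give an embedding of $S^2 \cup \ell$ into $S^2$, which is impossible. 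If you want to salvage your outline, you would need to supply, in the non-right-angled case, an argument of comparable completeness; as written, the decisive case of the theorem is missing.
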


\begin{proof}
Suppose that $h: \partial_\infty\widetilde X \rightarrow S^2$ is an embedding. Let $\SB$ be a sub-block covering $\Sigma \times S^1$, and let $\{\psi_+,\psi_-\}$ be the poles of $\partial_\infty \SB$. Let $\mathcal{P}$ be the collection of poles of adjacent blocks to $\SB$. For each pair of poles in $\mathcal{P}$, the longitudes from this pair to $\psi_+$ and $\psi_-$ give an embedded circle inside $\partial_\infty \SB$. 

We first claim that the images of each of these circles under the embedding $h$ must bound a disc inside $S^2 - h(\partial_\infty SB)$ if $h$ extends to an embedding of $\partial_\infty \widetilde X$. To see this, let $\{\psi'_+, \psi'_-\}$ be a pair of poles in $\mathcal{P}$ such that the corresponding circle $C$ does not bound a disc in $S^2 - h(\partial_\infty \SB)$. This implies there are longitudes $\ell_1$ and $\ell_2$ in $\partial_\infty \SB$ which connect $\psi_+$ to $\psi_-$ and map to different components of $S^2 - C$. In $\partial_\infty \widetilde X$, we can connect $\psi'_+$ to $\psi'_-$ with a longitude of an adjacent block. The image of this longitude under $h$ must intersect one of the $h(\ell_i)$, which is a contradiction.

We now claim there is no embedding of $$\partial_\infty SB \cup \ell$$ into $S^2$ such that each embedded circle as above bounds a disc, where $\ell$ is a longitude of the block $B$ containing $\SB$ and not contained in $\partial_\infty \SB$. Note that under the usual identification of $\partial_\infty \widetilde \Sigma$ with the Cantor set, the pairs in $\mathcal{P}$ are identified with endpoints of deleted intervals. 
If there was such an embedding $h$, then by attaching discs to the domain along these circles we could extend $\bar h$ to an embedding $S^2 \cup \ell \rightarrow S^2$, which is a contradiction. 
%Furthermore,  $g(\mathcal{P})$ must be strictly contained in $\mathcal{P}$, since we have to embed the curve $\ell$ inside $S^2$, and this cannot intersect any longitudes of $\partial_\infty Y$. 
%However, such a homeomorphism $g: C \rightarrow C$ does not exist. Indeed, let $H$ denote the following decomposition of $C$: $$H = \{\mathcal{P}\} \cup \text{singletons}.$$
%Then $C/H$ is homeomorphic to $S^1$. Thus, if $g(H)$ is the decomposition $$g(H) = \{g(P)\} \cup \text{singletons},$$ then $C/g(H)$ is also homeomorphic to $S^1$. However $g(\mathcal{P})$ is a proper subset of $\mathcal{P}$, and therefore misses a pair of endpoints $\{a,b\}$. Then the sets $$\{x \in C/H | x \le a\} \text{ and } \{x \in C/H | x \ge b\}$$ are a clopen decomposition of $C/H$, and hence $C/H$ is not connected. 
\end{proof}

If $X$ is right-angled, we can do better and find a nonplanar graph inside of $\partial_\infty \widetilde X$. 
\begin{Theorem}\label{t:K33}
If $X$ is right-angled, then there is an embedded $K_{3,3}$ graph inside $\partial_\infty \widetilde X$.
\end{Theorem}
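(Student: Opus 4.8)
The plan is to exhibit an explicit $K_{3,3}$ inside $\partial_\infty \widetilde X$ using the horizontal circle from Lemma~\ref{l:paths} together with the pole structure of a block $Y$ adjacent to a sub-block $\SB$. Fix a sub-block $\SB$ covering $\Sigma \times S^1$ with poles $\{\psi_+,\psi_-\}$, and let $Y$ be a block covering $Y_0$ that is adjacent to $\SB$ along a wall $W$; let $\{a, x\}$ be the poles of $\partial_\infty Y$ (these are the endpoints of a lift $\widetilde\alpha_0 \subset W$, so they lie on $\partial_\infty \SB$ as the endpoints of a boundary pair of longitudes). Since $X$ is right-angled, Lemma~\ref{l:paths} gives a horizontal circle $C \subset \partial_\infty \widetilde X$ containing $\partial_\infty \widetilde\Sigma \subset \partial_\infty \SB$ and avoiding $\psi_+, \psi_-$; in particular $C$ contains both $a$ and $x$, which it separates into two arcs. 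The idea is that $a$ and $x$ are joined by three internally disjoint arcs (the two arcs of $C$, plus a longitude of $\partial_\infty Y$ through the ``vertical'' direction in $Y$), i.e. a theta-graph, and similarly $\psi_+$ and $\psi_-$ are joined by three internally disjoint arcs through $\partial_\infty \SB$; the combinatorics of how these interleave along $C$ produces a $K_{3,3}$ — exactly the vertex sets $\{a,b,c\}$, $\{x,y,z\}$ indicated in Figure~\ref{figure:main_example}.

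Concretely, I would proceed as follows. First, name six points: let $a, x$ be the poles of $\partial_\infty Y$ as above; let $b, c$ be two further points on $C$ chosen to lie on the two distinct arcs of $C \smallsetminus \{a,x\}$, positioned so that the cyclic order around $C$ is $a, b, x, c$; and let $y, z$ be the two poles $\psi_+, \psi_-$ of $\partial_\infty \SB$. Now I build the nine edges. Four edges $ab$, $bx$, $xc$, $ca$ are the four sub-arcs of $C$ cut off by $\{a,b,x,c\}$. The edge $ax$ is a longitude of $\partial_\infty Y$ connecting its two poles and disjoint from $C$ except at its endpoints — such a longitude exists because $\partial_\infty Y$ is a suspension of a Cantor set, and the half-flat extension used to build $C$ only uses one vertical direction in $Y$, so a longitude ``on the other side'' is available in $\partial_\infty \widetilde X$ and meets $C$ only at $a,x$. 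The edges $by$, $bz$, $cy$, $cz$ are longitudes inside $\partial_\infty \SB$ running from $b$ (resp. $c$) — which lies on $\partial_\infty\widetilde\Sigma$, hence on some longitude of $\partial_\infty\SB$ — to the poles $\psi_\pm = y, z$; these are arcs of $\partial_\infty \SB$ and are disjoint from one another except at shared endpoints, and meet $C$ only at $b$ or $c$ (after possibly perturbing the choice of $b,c$ to avoid the finitely/countably many longitudes realized in $C$). Checking the bipartite adjacency: $a$–$x$ via $ax$; $a$–$y$? — here I need an arc from $a$ to $\psi_+$: since $a$ is an endpoint of the boundary pair, $a$ lies on a boundary longitude of $\partial_\infty\SB$ connecting $\psi_+$ to $\psi_-$, giving arcs $a$–$y$ and $a$–$z$ inside $\partial_\infty\SB$. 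So in fact all of $a,b,c$ connect to all of $x=$? — I should instead take the partition $\{a,b,c\}$ versus $\{x,y,z\}$ with $x$ the other pole of $Y$: $a$–$x$ (longitude of $Y$), $a$–$y$, $a$–$z$ (boundary longitudes of $\SB$ through $a$), $b$–$x$, $c$–$x$ (arcs of $C$), $b$–$y$, $b$–$z$, $c$–$y$, $c$–$z$ (longitudes of $\SB$). That is nine edges forming $K_{3,3}$; internal disjointness is the content of the verification.

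The main obstacle is precisely the internal disjointness of these nine arcs: the arcs of $C$, the longitude of $\partial_\infty Y$, and the various longitudes of $\partial_\infty \SB$ must be pairwise disjoint except at the prescribed shared endpoints. The two potential collisions are (i) the $Y$-longitude $ax$ meeting $C$ at an interior point, and (ii) the $\SB$-longitudes through $b$ or $c$ meeting $C$, or meeting the boundary longitudes through $a$, away from their endpoints. For (i) one uses that $C$ enters $\partial_\infty Y$ only along half-flats over the discrete family of lifts of curves in $T_0$, so a longitude realizing a different ``vertical'' end of $Y$ meets $C$ only in poles; for (ii) one notes that distinct longitudes of the suspension $\partial_\infty \SB$ meet only at poles, and that $C \cap \partial_\infty\SB$ is itself a controlled union of (pieces of) longitudes, so after choosing $b,c$ in $\partial_\infty\widetilde\Sigma$ off the countable bad set we get disjointness. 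Once these transversality points are nailed down, the bipartite graph on $\{a,b,c\} \sqcup \{x,y,z\}$ with the nine listed arcs is an embedded $K_{3,3}$, proving the theorem.
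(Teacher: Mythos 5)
Your overall mechanism (the horizontal circle of Lemma~\ref{l:paths} plus longitudes of the suspensions) is the same one the paper uses, but your specific placement of the six vertices does not work, and the failure is not a removable technicality. You take $a,x$ to be the poles of an adjacent block $Y$, i.e.\ the two endpoints of a single lift $\widetilde\alpha_0$. With that choice the two arcs of $C\smallsetminus\{a,x\}$ are very asymmetric: one of them is precisely the excursion of $C$ into $\partial_\infty Y$ (the image of the open interval of directions at $p$ whose horizontal geodesics exit $\widetilde\Sigma$ through that boundary line), and its interior misses $\partial_\infty \SB$ entirely; this is exactly the ``endpoints of removed intervals'' picture, in which $a$ and $x$ bound a complementary interval of the equatorial Cantor set. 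Consequently every point of $\partial_\infty\widetilde\Sigma$ other than $a,x$ lies on the \emph{other} arc, so you cannot choose $b,c\in\partial_\infty\widetilde\Sigma$ in cyclic order $a,b,x,c$: both $b$ and $c$ are forced onto one side, and then the circle arcs you want for the edges $b$--$x$ and $c$--$x$ must pass through $a$ or through the other chosen vertex. Nor can you instead put $b$ (say) on the $\partial_\infty Y$--excursion arc, because then the edges $b$--$y$, $b$--$z$ would have to travel inside the suspension $\partial_\infty Y$ from one longitude to the points $y,z$ on $\partial_\infty W$, and any such path passes through a pole of $\partial_\infty Y$, i.e.\ through the vertices $a$ or $x$. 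So the nine arcs cannot be made internally disjoint as described.

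There is also a structural reason this cannot be patched within your toolkit: every piece you use ($\partial_\infty\SB$, the horizontal circle, which is built only from the blocks covering $Y_0$ and $Y_1$, and $\partial_\infty Y$) lies in the visual boundary of the convex component of the preimage of $(\Sigma\times S^1)\cup T_0\cup T_1$, on which the corresponding (3--manifold) subgroup acts geometrically; by the Bestvina--Kapovich--Kleiner result cited in the introduction, that boundary contains no nonplanar graph, so any embedded $K_{3,3}$ must use the $T_2$--gluing. This is exactly what the paper's proof does, and where it differs from yours: it takes $a,x$ to be the two suspension points of $\partial_\infty\widetilde M$ (equivalently the poles of $\partial_\infty\SB$, which are \emph{off} the circle), takes $b,y,c,z$ in this cyclic order on the horizontal circle, uses the four circle arcs together with half-longitudes of $\partial_\infty\SB$ joining $a$ to $y,z$ and $x$ to $b,c$, and realizes the ninth edge $a$--$x$ as a longitude of $\partial_\infty\widetilde M$ running through the part of the block boundary outside $\partial_\infty\SB$ (through the $T_2$ flats), which is why that edge can avoid the circle. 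Rearranging your construction along these lines recovers the theorem; as written, it has a genuine gap.
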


\begin{proof}
An illustration of the $K_{3,3}$ subgraph on vertex sets $\{a,b,c\}$ and $\{x,y,z\}$ appears in Figure~\ref{figure:main_example}. Choose any block covering $M$, and let $b,y,c,z$ be points that lie, in this order, on the horizontal circle containing a copy of $\p \widetilde{\Sigma}$ guaranteed by Lemma~\ref{l:paths}. Let $a$ and $x$ be the suspension points of $\p M$. Then, there are longitudes in $\p (\widetilde{\Sigma \times S^1})$ connecting $a$ to $y$ and $z$, and longitudes connecting $x$ to $b$ and $c$. Furthermore, the horizontal circle gives paths from $b$ to $y$ and $z$ and from $c$ to $y$ and $z$. Finally, in $\p \widetilde{M}$, there is a longitude connecting $a$ and $x$ that does not intersect the horizontal circle. Thus, $\p \widetilde{X}$ contains an embedded copy of $K_{3,3}$.
%Choose any block covering $M$, and choose $4$ points $A,B,C$ and $D$ in $\partial M$ that lie on the horizontal circle containing $\partial_\infty \widetilde \Sigma$. Let $E$ and $F$ be the suspension points of $\partial_\infty M$, so that we naturally have longitudes in $\partial_\infty \widetilde X$ connecting $A,B,C,$ and $D$ to $E$ and $F$.  This graph is of course planar. However, in $\partial_\infty \widetilde M$ we can connect $A$ and $B$ by a longitude which misses the horizontal circle. Now, this graph contains a copy of $K_{3,3}$, see Figure \ref{}. 
\end{proof}

\section{No nonplanar graphs if $X$ is not right-angled}\label{s:npg}

In this section, we use our analysis in Section~\ref{sec:paths} of the paths in $\partial_\infty \widetilde X$ to show the following theorem. 

\begin{Theorem}\label{t:planar}
If $X$ is not right-angled, then there is no embedded nonplanar graph inside $\partial_\infty \widetilde X$. 
\end{Theorem}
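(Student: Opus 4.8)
The plan is to derive a contradiction from the existence of an embedded nonplanar graph $\gG \subset \partial_\infty \widetilde X$, using the structure of local path components established in Section~\ref{sec:paths} together with the non-planarity bookkeeping from Theorem~\ref{theorem_nonplanar}. The starting point is that any vertex of $\gG$ with degree $\geq 3$ cannot lie in a ``small'' local path component: by Theorem~\ref{l:paths5}, every point $\psi$ of $\partial_\infty \widetilde X$ that is not a pole of some block has a neighborhood in which its path component is confined to a single longitude of a block boundary $\p B$, hence is an arc; an arc contains no point of local degree $\geq 3$. Therefore every branch vertex of $\gG$ must be a pole of some block, and every edge of $\gG$ is a concatenation of arcs, each of which (away from poles) lies on a single longitude of a single block boundary. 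So $\gG$ decomposes along the poles $\mathcal P$ of blocks into ``longitudinal segments.''

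Next I would localize the argument to a single block $B$ covering $M$ and its adjacent blocks, as in Theorem~\ref{theorem_nonplanar}. Fix a sub-block $\SB \subset B$ covering $\Sigma\times S^1$ with poles $\{\psi_+,\psi_-\}$, and recall $\p\widetilde\Sigma \subset \p \SB$ is identified with a Cantor set whose deleted-interval endpoints are the pairs in $\mathcal P$ of poles of blocks adjacent to $\SB$. The key structural input is Lemma~\ref{l:paths7} (for poles of blocks covering $M$) and Lemma~\ref{l:paths9} (for poles of blocks covering $Y_i$): when $X$ is not right-angled, two points on a block boundary lie in the same path component of the complement of the block's poles only if they lie on the same longitude or on a boundary pair. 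This is exactly the planarity-type constraint: it says the ``horizontal circle'' of Lemma~\ref{l:paths} does \emph{not} exist, so there is no way to route a path transversally across $\p\SB$ between longitudes that are not a boundary pair without passing through a pole. Combined with Theorem~\ref{theorem_nonplanar}'s observation — that any embedding of $\p\SB$ in $S^2$ forces each of the ``pole-pair'' circles in $\p\SB$ to bound a disk — this should let me build, for each connected subgraph of $\gG$ meeting $\p\SB$, a planar model: the boundary $\p\SB$ together with the finitely many longitudes used by $\gG$ embeds in $S^2$, and the constraint on path components means $\gG$ near $\SB$ can be drawn inside this planar model without crossings.

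The technical heart is then an inductive planarization. I would argue that $\gG$, being finite, meets only finitely many blocks, and meets each block boundary in a finite union of longitudinal arcs. Using the tree-like arrangement of blocks (Lemma: $\pi_1(X)$ splits as an amalgam over the $\pi_1(\alpha_i)$, so the blocks form a tree of spaces), I would induct on the number of blocks met by $\gG$: a ``leaf'' block $B'$ meets $\gG$ in arcs attaching to the rest of $\gG$ only along one wall / one pole pair, so by Lemma~\ref{l:paths7} or \ref{l:paths9} the portion of $\gG$ in $\p B'$ lives on a bounded set of longitudes that, by Theorem~\ref{theorem_nonplanar}'s disk-bounding property, can be planarly embedded in a neighborhood; then collapse $B'$ and recurse. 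At the base case $\gG$ lies in a single block boundary $\p B$ — a suspension of a Cantor set, which is planar — and every edge lies on a longitude, so $\gG$ embeds in $\p B \subset S^2$. Reassembling the planar pieces across the (tree of) poles yields a planar embedding of $\gG$, contradicting non-planarity.

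\textbf{Main obstacle.} The delicate point is the reassembly / induction step: I must control how an edge of $\gG$ can pass \emph{through} a pole $\psi$ of an adjacent block and continue into a neighboring block boundary, and check that the Lemma~\ref{l:paths7}/\ref{l:paths9} dichotomy genuinely prevents the kind of ``non-nested'' crossing configuration that produces $K_{3,3}$ or $K_5$. Concretely, an edge of $\gG$ may enter $\p B$, reach a pole $\psi'_+$ of an adjacent block $Y$, cross into $\p Y$, and re-emerge; I need to show (as in the proof of Lemma~\ref{l:paths7}) that such an excursion is forced to be ``nested'' — it returns through $\psi'_-$ and the two poles $\{\psi'_+,\psi'_-\}$ act as a cut pair separating the two sides — so that the excursion can be absorbed into the planar model without creating a crossing. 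Making this compatibility precise across all blocks simultaneously, i.e.\ proving that the locally-nested structure globalizes to a planar embedding, is where the real work lies; I expect it to require a careful finite combinatorial argument organized by the tree of blocks and a Claytor/Kuratowski-style analysis of the finitely many longitudes of $\gG$ in each block boundary.
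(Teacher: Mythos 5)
Your proposal has two genuine gaps. First, the reduction ``every branch vertex of $\gG$ must be a pole of some block'' is not justified: Theorem~\ref{l:paths5} only describes local path components of points that lie \emph{on a block boundary}. Points of $\partial_\infty \widetilde X$ with infinite $p$-itinerary lie in no block boundary at all, and nothing you cite controls their local path components, so an embedded $K_{3,3}$ could a priori have all its vertices (in particular its degree-$3$ vertices) at such points. This is exactly the case the paper must handle separately, by a coning argument showing there cannot be three disjoint paths emanating from a point $x$ all of whose points share the $p$-itinerary of $x$, combined with Lemma~\ref{l:itineraries} to force the three edges at such a vertex to pass through the boundary of a common block $B$ and hence through its poles --- three edges, two poles, contradiction. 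Your outline never addresses these points, so the very first step of your decomposition into ``longitudinal segments meeting at poles'' is unproved.

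Second, the heart of your argument --- the inductive planarization over the tree of blocks and the reassembly of local planar models into a global planar embedding of $\gG$ --- is exactly the step you acknowledge is missing, and it is not a routine finishing touch: in your framing it \emph{is} the theorem. The paper avoids needing any such global construction by invoking Kuratowski at the outset: it suffices to rule out embedded (subdivided) $K_{3,3}$ and $K_5$, and this is done purely locally, by analyzing the three edges at a single branch vertex using the cut-pair statements of Lemmas~\ref{l:paths9} and~\ref{l:paths7} (when the vertex is a pole of a block covering $Y_i$ or $M$, respectively), plus the itinerary argument above. Also note that Theorem~\ref{theorem_nonplanar} shows $\partial_\infty \widetilde X$ is nonplanar even in the non-right-angled case, so its disk-bounding observation cannot be leveraged into a planar model of a neighborhood of $\partial_\infty \SB$ in the way you suggest; any ``planar model'' must be built only from the finitely many longitudes actually used by $\gG$, and making that precise and compatible across walls is an argument you have not supplied. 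As it stands the proposal is a plausible plan with its two essential steps (infinite-itinerary vertices; global planarization) missing, whereas the paper's proof replaces both by a short local analysis of $K_{3,3}$ and $K_5$.
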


In order to deal with the points in $\partial_\infty \widetilde X$ that are not contained in a block boundary, we need the following lemma, which is a slightly refined version of \cite[Lemma 7]{crokekleiner}. 

\begin{lemma}\label{l:itineraries}
Choose a basepoint $p$ in $X$ as above and $\psi$ a point in $\partial_\infty X$ with infinite $p$-itinerary $(B_0, B_1, B_2, \dots)$. Let $\rho:[0,1] \rightarrow \p X$ be a path in $\partial_\infty X$ with $\rho(0) = \psi$ so that there exists points in $\rho$ with different $p$-itinerary than $\psi$. Then there is $N \in \mathbb{N}$ so that for each $n > N$, there is a point in $\rho$ with finite $p$-itinerary $(B_0, B_1, \dots B_n)$. In particular, these points in $\rho$ are contained in $\partial_\infty B_n$. 		
\end{lemma}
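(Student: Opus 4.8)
The plan is to track how the geodesic rays $p\psi'$, for $\psi'$ moving along the path $\rho$, peel off from the rays with itinerary prefix $(B_0,\dots,B_n)$. Fix $n$. Let $W_n$ be the wall separating $B_n$ from $B_{n+1}$; since $p$ is not in any wall and $\psi$ has itinerary starting $(B_0,\dots,B_n,B_{n+1},\dots)$, the ray $p\psi$ crosses $W_n$ transversally at a point not in a wall of $B_n$, and in particular $\psi\notin\partial_\infty B_n$. Consider the subset $U_n=\{\psi'\in\rho : \text{the ray } p\psi' \text{ crosses } W_n \text{ and its itinerary begins } (B_0,\dots,B_n,B_{n+1})\}$, i.e.\ the points whose rays go "past" $B_n$ in the same way $\psi$ does. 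The idea is that $U_n$ is open in $\rho$ (rays crossing a wall transversally is an open condition, using convexity of blocks and that the crossing point stays off the finitely many subwalls), $\psi\in U_n$, and $\rho(1)$, or more precisely some point of $\rho$, is not in $U_n$ by hypothesis — so $\partial U_n\cap\rho\neq\varnothing$. A boundary point $\psi'$ of $U_n$ in $\rho$ has a ray $p\psi'$ that is a limit of rays crossing $W_n$ but does not itself cross $W_n$ transversally into $B_{n+1}$; by convexity and the structure of the blocks (each block is convex, and two blocks meet in a wall or are disjoint) such a ray must limit into $W_n$ itself, hence end in $\partial_\infty W_n\subset\partial_\infty B_n$. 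Thus for this $\psi'$ we get $\Itin(\psi')=(B_0,\dots,B_n)$, exactly the finite itinerary claimed, and automatically $\psi'\in\partial_\infty B_n$.

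It remains to produce the uniform $N$. The point is that the sets $U_n$ are nested, $U_0\supseteq U_1\supseteq U_2\supseteq\cdots$ (if a ray's itinerary begins $(B_0,\dots,B_{n+1},B_{n+2})$ then a fortiori it begins $(B_0,\dots,B_{n+1})$), all contain $\psi$, and their intersection $\bigcap_n U_n$ is exactly the set of points of $\rho$ with infinite itinerary having prefix every $(B_0,\dots,B_n)$ — which is just $\{\psi\}$ by the hypothesis that $\rho$ contains points of itinerary different from $\psi$'s, combined with the fact that two points with the same infinite itinerary and lying on a path close to $\psi$ are forced to coincide (this is the content of \cite[Lemma 7]{crokekleiner}, whose argument we are refining). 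Hence $U_n\subsetneq\rho$ for all $n$ large enough; more carefully, since $\rho$ is connected and $U_n$ open with $\psi\in U_n$ but $U_n\neq\rho$, we get $\partial U_n\cap\rho\neq\varnothing$ for all such $n$, producing for each large $n$ the desired point with itinerary $(B_0,\dots,B_n)$. Set $N$ to be any index past which $U_n\neq\rho$; such $N$ exists because already $U_{n_0}\neq\rho$ for some $n_0$ (a point of $\rho$ with itinerary differing from $\psi$'s first disagrees at some finite stage), and the $U_n$ are nested downward.

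The main obstacle is the openness and limit analysis in the first paragraph: one must argue carefully that "the ray $p\psi'$ crosses $W_n$ transversally, passing into $B_{n+1}$" is genuinely open along $\rho$, and that a point of $\rho$ on the frontier of $U_n$ has its ray ending in $\partial_\infty W_n$ rather than, say, re-entering $B_n$ and exiting through a different wall. Here one uses that $B_n$ is convex with boundary a suspension of a Cantor set, that the walls inside $B_n$ form a discrete family, and continuity of the endpoint map $\psi'\mapsto p\psi'$ on compact subsets — so a limit of rays each crossing $W_n$ either crosses $W_n$ or is asymptotic to it. This is exactly the kind of argument already deployed in the "Open" and "Closed" steps of the proof of Theorem~\ref{l:paths5}, so it should go through in the same way; the only new wrinkle is bookkeeping the itinerary prefix rather than a single exit point.
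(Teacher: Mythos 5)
Your proposal is essentially the paper's own argument: the paper takes $t_0=\inf\{\,t : B_n\notin\Itin(\rho(t))\,\}$ and runs the same dichotomy on the wall-crossing points of the rays $\overline{p\rho(t)}$ (bounded crossing points would force the limit ray into the interior of $B_n$, a contradiction; unbounded ones force $\rho(t_0)$ to a pole in $\partial_\infty B_n$), which is exactly your frontier-point analysis of the open set $U_n$ phrased with the infimum parameter instead of a frontier. Your nestedness observation producing the uniform $N$ likewise matches the paper's closing remark that once one block is dropped from some itinerary, all later blocks are dropped as well, so the proposal is correct and takes the same route.
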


\begin{proof}
Since all of the points in the path $\rho$ do not have the same itinerary, there is a block $B_N\in \Itin(\psi)$ so that $B_N$ is not in the itinerary of $\rho(t)$ for all $t\in [0,1]$. Let $$t_0 = \inf\, \{\,t \,\,|\,\,B_N \notin \Itin(\rho(t))\,\}.$$ 
By assumption on $t_0$, the block $B_N$ is in the $p$-itinerary of $\rho(t)$ for all $t < t_0$. For $t < t_0$, let $x_t$ be the point on the geodesic $\overline{p\rho(t)}$ which lie on the wall between $B_{N-1}$ and $B_N$. The points $x_t$ lie on a lift of $\alpha_i$ for some $i \in \{0,1\}$, and since the lifts are discrete, the geodesics $\overline{p\rho(t)}$ must all leave from the same lift. If the set $\{x_t\}$ were contained in a bounded subset of this lift, then $p\rho(t_0)$ would intersect the interior of $B_N$, contradicting our assumption. Therefore, there is a subsequence of $\{x_t\}$ that converges to an endpoint of a lift of the $\alpha_i$, which implies that $\rho(t_0)$ is precisely the point in the boundary corresponding to this endpoint, and hence is a pole in $\partial_\infty B_N$. We can repeat this argument for any block $B_n$ for $n > N$, since all of these blocks are not contained in $\rho$ for all $t$. 
\end{proof}

\begin{proof}[Proof of Theorem \ref{t:planar}]
A graph $H$ is a \emph{topological minor} of a graph $G$ if $G$ contains a subdivision of $H$ as a subgraph. Kuratowski's theorem \cite{kuratowski} states that a finite graph is nonplanar if and only if it does not have $K_{3,3}$ or a $K_5$ as a minor, so we have to rule out these two graphs. 
We give the proof for $K_{3,3}$; the proof for $K_5$ is similar. 

First assume that a vertex of $K_{3,3}$ is mapped into a block boundary. In this case, the vertex is mapped to a pole of that block by Theorem~\ref{l:paths5}. 

Suppose that a vertex $v$ is mapped to a pole $\psi_+$ in the boundary of a block $Y$ which covers $Y_i$ for some $i \in \{0,1\}$. Let $e_1, e_2$, and $e_3$ denote the edges containing $v$ in $K_{3,3}$, and let $\psi_-$ denote the other pole of $\partial_\infty Y$. If each $e_i$ maps to different components of $\partial_\infty \widetilde X - \{\psi_+, \psi_-\}$, then any loop $\lambda_{ij}$ in $K_{3,3}$ which contains $e_i$ and $e_j$ passes through $\psi_-$. However, there are three loops $\lambda_{ij}$ in $K_{3,3}$ so that $\lambda_{ij}$ connects $e_i$ with $e_j$ and so that their total intersection is $v$, which is a contradiction. 

Therefore, we can assume $e_1$ is contained in a different component of $\partial_\infty \widetilde X - \{\psi_+, \psi_-\}$ than $e_2$ and $e_3$, which, by Lemma \ref{l:paths9}, means that $e_2$ and $e_3$ lie on the boundary $\partial_\infty B$ of an adjacent sub-block $B$. As above, the loops $\lambda_{12}$ and $\lambda_{13}$ have to intersect at $\psi_-$, which implies that $\psi_-$ lies on $e_1$. Note that the images of $e_2$ and $e_3$ contain the poles of $\partial_\infty B$. 

Assume without loss of generality that the other endpoint $w$ of $e_1$ maps to $\psi_-$. There are paths in $K_{3,3}$ connecting $w$ to the endpoints of $e_2$ and $e_3$ that are disjoint from $v$, which implies by Lemma \ref{l:paths9} that the other edges of $K_{3,3}$ containing $w$ other than $e_1$ must map into $\partial_\infty B$. The images of these edges must contain the poles of $\partial_\infty B$, but in $K_{3,3}$ these edges are disjoint from $e_2$ and $e_3$, which is a contradiction.

Now suppose a vertex $v$ is mapped to a pole $\psi$ in the boundary of a block $B$ which covers $M$. Again, let $e_1, e_2$, and $e_3$ denote the edges leaving $v$, and $\psi_-$ the other pole. In this case, no pair of edges can be mapped to a boundary pair. If this occurred, there would be two points on these edges that are mapped to the poles of an adjacent block. This pair $\{\psi'_+, \psi'_-\}$ is a cut pair for $\partial_\infty X - \{\psi,\psi_-\}$, and no pair of points on these edges is a cut pair for $K_{3,3}$. Therefore, the paths would have to continue in the same component, which implies by Lemma \ref{l:paths9} that they continue on $\partial_\infty B$. This implies that they eventually intersect at $\psi_-$, which is a contradiction. 

Therefore, these paths must map to longitudes that are not boundary pairs. By Lemma \ref{l:paths7} these edges lie in different path components of the space $\partial_\infty \widetilde X - \{\psi_+,\psi_-\}$, so any loop between them has to pass through $\psi_-$. This again is a contradiction. 
 
We now suppose that all vertices in $K_{3,3}$ map to points in $\partial_\infty \widetilde X$ that are not contained in a block boundary, and hence have infinite $p$-itinerary for some (any) choice of basepoint $p \in \widetilde X$. 

We first claim that given a point $x \in \partial_\infty \widetilde X$ with infinite $p$-itinerary, there cannot exist three disjoint paths $\gamma_i$ starting at $x$ such that each point in the $\gamma_i$ has the same $p$-itinerary as $x$. Suppose that there are, and let $Y$ denote their union. The union of the geodesics from $p$ to points in $Y$ yields a proper map from $\text{Cone}_\infty(Y):= \left( Y \times [0, \infty)\right)/ \left(Y \times 0 \right) \rightarrow \widetilde X$.
Furthermore, there exists $t > 0$ so that $$(v \times t) \cap (e \times [t,\infty))= \emptyset$$ for every pair of disjoint vertex and edge $v, e \subset Y$. 

Choose a block in the $p$-itinerary of these points and a lift $\widetilde \alpha_i$ of $\alpha_i$ so that all geodesics in this cone exit the block through $\widetilde \alpha_i$. We can choose $\widetilde \alpha_i$ far enough away from $p$ so that for each edge $e \subset T$, the sector $e \times [t,\infty)$ intersects $\widetilde \alpha_i$ in an interval containing the exit point for the geodesic $\overline{px}$.  At least one of the exit points of the geodesics $\overline{pv}$ must be contained in the interior of these intervals, which is a contradiction.

We now see the itineraries of the vertices of $K_{3,3}$ are restricted. Suppose $x$ is a vertex of $K_{3,3}$ with fixed $p$-itinerary. The previous paragraphs imply there are three edges in $K_{3,3}$ where one vertex of the edge has the same $p$-itinerary as $x$ and the edge does not consist entirely of points with the same itinerary $p$-itinerary as $x$. Lemma \ref{l:itineraries} implies there is a block $B$ and a vertex in the interior of each of these edges mapping to $\partial_\infty B$. The endpoints of these edges are not in $\partial_\infty B$, so each of these edges must enter and exit $\partial_\infty B$, intersecting a pole of $B$. Since there are three edges and only two poles, this is a contradiction. 
\end{proof}

\section{Finite graphs in Croke-Kleiner boundaries}\label{s:ckfinite}

We now prove Theorem \ref{theorem_main2} from the Introduction. We first briefly recall some of the details of the Croke-Kleiner construction. The $2$-complex $X_{CK}$ they consider is precisely $Y_0 \cup_\beta T^2$. Again, the homeomorphism type of $\partial_\infty \widetilde X_{CK}$ depends on the angle between $\alpha_0$ and $\beta$. 

The universal cover $\widetilde X_{CK}$ again decomposes into blocks meeting along walls in a similar way. (The blocks are preimages of $Y_0$ or $W_0 \cup_\beta T^2$ and the walls are preimages of $W_0$.)

In $\partial_\infty \widetilde X_{CK}$, the local path components of any pole are contained in the block boundary \cite[Lemma 4]{crokekleiner}. In particular, there are no paths with the same behavior as our ``horizontal circles". This implies the following lemma.

\begin{lemma} \label{lemma:inf_tits_len}
Suppose an arc $\rho: [0,1] \rightarrow \partial_\infty \widetilde X_{CK}$ has infinite Tits length and $\rho(0)$ is contained in the union of the block boundaries. Then $\rho$ contains a point not contained in the union of the block boundaries. 
\end{lemma}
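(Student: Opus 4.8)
The plan is to argue by contradiction: suppose $\rho$ is an arc of infinite Tits length that stays entirely within the union of the block boundaries, with $\rho(0)$ in some block boundary $\partial_\infty B_0$. The key structural input is the analogue of Theorem~\ref{l:paths5} for $X_{CK}$, namely \cite[Lemma 4]{crokekleiner}: the local path component of a pole is contained in the block boundary, and more to the point, the path component of \emph{any} point $\psi \in \partial_\infty B$ in a small neighborhood is contained in the single longitude of $\partial_\infty B$ that $\psi$ lies on, unless $\psi$ is a pole. Since every block boundary is the suspension of a Cantor set, a longitude has finite Tits length (it is a single arc between the two poles, of Tits length $\pi$). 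So, to accumulate infinite Tits length while staying in block boundaries, the arc $\rho$ must repeatedly pass through poles and transit from one block boundary to an adjacent one.

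First I would set up the transit structure. Whenever $\rho$ leaves a block boundary $\partial_\infty B$, by \cite[Lemma 4]{crokekleiner} it can only do so by first reaching a pole of $B$; and if it is to continue within the union of block boundaries, that pole must be a pole of an adjacent block $B'$, i.e.\ the image of $\rho$ near that time lies in $\partial_\infty B \cup \partial_\infty B'$ and the two share that pole. (In $X_{CK}$ the poles of $\partial_\infty B$ for $B$ a block over $W_0 \cup_\beta T^2$ are not poles of adjacent blocks, whereas the endpoints of lifts of $\alpha_0$ are the shared poles — this is the exact analogue of our boundary-longitude discussion in Section~\ref{sec:paths}.) Since $\rho$ is an \emph{embedded} arc, once it leaves $\partial_\infty B'$ through one of its poles, it cannot return to $\partial_\infty B$ through the same pole; so the sequence of block boundaries visited, $B_0, B_1, B_2, \dots$, is genuinely a path in the adjacency graph (a tree, by the amalgamated product structure $G = \pi_1(M) \ast_{\langle\alpha_0\rangle} T^2$-type splitting), and it must be infinite — a finite chain would give $\rho$ only finitely many longitude-segments and hence finite Tits length.

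Now comes the main step, which I expect to be the obstacle: I must contradict the existence of such an infinite chain using that $\rho$ is an arc parametrized by $[0,1]$, i.e.\ that it is the continuous image of a compact interval in the \emph{visual} (cone) topology, not the Tits topology. The idea is an itinerary / exit-point argument exactly parallel to Lemma~\ref{l:itineraries}: fix a basepoint $p$; the blocks $B_0, B_1, \dots$ that $\rho$ successively enters are nested deeper and deeper in the $p$-itinerary. Let $t_n = \inf\{t : B_n \notin \Itin(\rho(t))\}$ or the time $\rho$ first reaches the shared pole entering $B_{n+1}$; as in Lemma~\ref{l:itineraries}, the points $\rho(t)$ for $t$ just below the transit time have geodesics $\overline{p\rho(t)}$ exiting $B_{n-1}$ through exit points on a lift of $\alpha_0$ that escape to infinity, forcing $\rho$ of the transit time to be the corresponding pole. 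Since $[0,1]$ is compact and the $t_n$ form a bounded monotone (after passing to a subsequence) sequence, they converge to some $t_\infty$, and $\rho(t_\infty)$ would be a boundary point lying in $\partial_\infty B_n$ for all large $n$ — impossible, since distinct blocks have disjoint boundaries except for shared poles, and the chain is infinite so no single pole lies in all of them. This contradiction shows $\rho$ could not have stayed in the union of block boundaries, completing the proof. The delicate point to get right is the interplay between the visual topology (in which $\rho$ is continuous and $[0,1]$ compact) and the Tits topology (in which lengths are measured), and verifying that the transit-pole must indeed be a \emph{shared} pole of adjacent blocks rather than one of the non-shared poles — this is where the specific geometry of $X_{CK}$, i.e.\ that $\rho$ staying in block boundaries forces it through the $\alpha_0$-endpoints, is used.
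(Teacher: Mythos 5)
Your endgame has a genuine gap. After producing the infinite chain $B_0,B_1,B_2,\dots$ and the transit times $t_n\to t_\infty$, you conclude that $\rho(t_\infty)$ ``lies in $\partial_\infty B_n$ for all large $n$,'' which does not follow: $\rho(t_\infty)$ is only a cone-topology limit of points $\rho(t_n)\in\partial_\infty B_n$ for \emph{varying} $n$, and such a limit need not lie in any fixed $\partial_\infty B_n$. Worse, it can perfectly well lie in some block boundary: if the exit points of the rays $\overline{p\rho(t_n)}$ on an early wall $W_m$ escape to infinity, the limit point lands in $\partial_\infty W_m\subset\partial_\infty B_m$, which is consistent with the hypothesis you are trying to contradict. (Also, your supporting claim that distinct blocks ``have disjoint boundaries except for shared poles'' is false --- adjacent block boundaries share the entire wall circle $\partial_\infty W$; and in $\widetilde X_{CK}$ a path leaves $\partial_\infty B$ at a pole of the \emph{adjacent} block, never at a pole of $B$ itself, since the four poles on a wall circle are distinct when the angle is positive --- so your transit description has this backwards.) A further unaddressed point is that the transit times could a priori accumulate at an interior parameter, so even the enumeration of the visited blocks as a single well-ordered chain needs justification rather than being the conclusion of the argument.

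The paper's proof is much shorter and sidesteps all of this: for \emph{every} $t\in[0,1]$, the local path-component statement (\cite[Lemma 4]{crokekleiner}) gives a neighborhood $U$ of $\rho(t)$ and a block $B$ with the germ of $\rho$ at $t$ contained in $\partial_\infty B$; compactness of $[0,1]$ then puts $\rho$ inside a finite union of block boundaries, and an arc in finitely many suspensions of Cantor sets has finite Tits length --- contradiction. Note that this per-point use of the local lemma, applied in particular at the accumulation parameter $t_\infty$, is exactly what would repair your argument: a terminal subarc $\rho([t_\infty-\delta,t_\infty])$ would have to lie in a single block boundary, while it contains the transit poles of $B_n$ for arbitrarily large $n$, and consecutive transit poles are at Tits distance bounded below by the (positive) angle, so that subarc would have infinite Tits length inside one suspension, which is impossible. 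Without some such input at the limit point, your contradiction does not go through.
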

\begin{proof}
Suppose $\rho \subset \cup_{B \in \mathcal{B}} \partial_\infty B$, where $\mathcal{B}$ is the set of all blocks in $\widetilde{X}_{CK}$. For any $t \in [0,1]$ there is a neighborhood $U$ of $\rho(t)$ and a block $B_0 \in \mathcal{B}$ such that $\rho \cap U \subset \partial_\infty B_0$ by \cite[Lemma 4]{crokekleiner}. These neighborhoods cover $\rho$, and taking a finite subcover guarantees that $\rho$ is contained in a finite union of block boundaries, and therefore has finite length in the Tits metric. 
\end{proof}

\begin{Theorem}
Suppose $X_1$ and $X_2$ are homeomorphic to the Croke--Kleiner complex $X_{CK}$ and equipped with locally $CAT(0)$ metrics. If $\gG$ is a finite graph contained in $\partial_\infty \widetilde{X}_1$, then there is an embedding of the graph $\gG$ into $\partial_\infty \widetilde{X}_2$. 
\end{Theorem}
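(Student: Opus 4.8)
## Proof Proposal

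The plan is to exploit the fact, established in the excerpt, that the "horizontal circles" responsible for extra paths only appear in the right-angled case, and that $X_{CK} = Y_0 \cup_\beta T^2$ has no right-angled sub-block structure at all — every locally $\CAT(0)$ metric on $X_{CK}$ is ``non right-angled'' in the relevant sense, since there is no product sub-block with two independent gluing curves meeting at a variable angle. More precisely, I would argue that the \emph{combinatorial pattern} of block boundaries, poles, longitudes, and walls in $\partial_\infty \widetilde{X}_{CK}$ is independent of the locally $\CAT(0)$ metric, and only the \emph{Tits geometry} (in particular the metric angles, hence which pairs of points can be joined by finite-length arcs) changes. Since a finite graph $\gG$ embedded in $\partial_\infty \widetilde{X}_1$ has compact, hence finite-Tits-length-controllable image, Lemma~\ref{lemma:inf_tits_len} forces each edge of $\gG$ to be covered by finitely many block boundaries, and within each block boundary the edge lives in a suspension of a Cantor set, whose topology does not depend on the metric.

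The key steps, in order. First I would fix, once and for all, a ``model'' description of $\partial_\infty \widetilde{X}_{CK}$ as a quotient of a tree of suspension-of-Cantor-set pieces glued along pole-pairs and longitude-pairs (the boundary pairs), noting that this model, as an abstract topological space, receives a canonical continuous bijection from $\partial_\infty \widetilde{X}_i$ for any metric — in fact one shows the block boundaries, the poles, and the longitudes are preserved by the homeomorphism $X_1 \cong X_2$ up to the induced structure, using that the block/wall decomposition is read off from the amalgamated product splitting $\pi_1(X_{CK}) = \pi_1(Y_0) \ast_{\langle \alpha_0\rangle} \pi_1(W_0 \cup_\beta T^2)$, which is metric-independent. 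Second, given an embedding $h\colon \gG \hookrightarrow \partial_\infty \widetilde{X}_1$, I would use Lemma~\ref{l:itineraries} (applied to $X_{CK}$, where it holds verbatim) and the three-disjoint-paths argument from the proof of Theorem~\ref{t:planar} to show that at most two edges of $\gG$ at any vertex can ``escape'' to infinite itinerary in a coordinated way, so after subdividing, every vertex of $\gG$ either maps into a block boundary (to a pole, by \cite[Lemma 4]{crokekleiner}) or has an edge-neighborhood passing through a definite block boundary; in either case the image of $\gG$ is contained in a \emph{finite} subcomplex $\cal{F}_1$ of block boundaries, joined along finitely many pole-pairs. Third, I would transfer $\cal{F}_1$ to $\partial_\infty \widetilde{X}_2$: because each block boundary is a suspension of a Cantor set regardless of metric, and the gluing data (which poles coincide, which longitudes form boundary pairs) is combinatorial, there is a homeomorphism from $\cal{F}_1$ onto the corresponding finite subcomplex $\cal{F}_2 \subset \partial_\infty \widetilde{X}_2$ built from the same blocks. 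Composing $h$ with this homeomorphism embeds $\gG$ into $\partial_\infty \widetilde{X}_2$.

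The main obstacle I anticipate is the third step — arranging that the finite subcomplex of block boundaries carrying $h(\gG)$ in $\widetilde{X}_1$ is genuinely \emph{abstractly homeomorphic}, respecting the gluing, to a subcomplex of $\partial_\infty \widetilde{X}_2$. Two subtleties arise. First, a single edge of $\gG$ may cross a wall, i.e. pass from one block boundary through a pole into an adjacent block boundary; one must check that the transition is through a pole that corresponds, under the canonical combinatorial identification, to a genuine pole on the $X_2$ side — this is where one uses that poles are exactly the suspension points of the Cantor-set suspensions and that the wall (a lift of $W_0$) has the same combinatorial incidence with blocks in both covers. Second, one must ensure the embedding can be realized \emph{topologically} and not merely combinatorially: since the pieces are suspensions of Cantor sets and the gluings identify a Cantor-set's worth of pole/longitude data, one needs a homeomorphism of Cantor sets matching the gluing patterns, which exists because the patterns are determined by the same $\pi_1$-splitting on both sides (this is essentially the statement, implicit in Croke--Kleiner, that the ``coarse'' structure of $\partial_\infty \widetilde{X}_{CK}$ — everything except the Tits angles — is metric-independent).

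\begin{proof}
Fix locally $\CAT(0)$ metrics on $X_1, X_2$ homeomorphic to $X_{CK}$. Write $G = \pi_1(X_{CK})$, with its amalgamated splitting over $\langle\alpha_0\rangle$; the block/wall decomposition of $\widetilde{X}_i$, the adjacency pattern of blocks, and the identification of each block boundary with a suspension of a Cantor set together with a distinguished countable dense set of poles and a pattern of longitude boundary-pairs are all determined by this splitting, hence are naturally identified for $i=1$ and $i=2$ by a combinatorial isomorphism $\Phi$ of the associated trees-of-spaces; call the corresponding finite subcomplexes on the two sides ``$\Phi$-matched.'' Each $\Phi$-matched finite subcomplex in $\widetilde{X}_1$ is homeomorphic, via a homeomorphism compatible with $\Phi$, to its mate in $\widetilde{X}_2$, since each is a finite union of suspensions of Cantor sets glued along pairs of poles and longitudes in combinatorially identical patterns.

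Let $h\colon \gG \hookrightarrow \partial_\infty \widetilde{X}_1$ be an embedding of a finite graph. After subdividing $\gG$, we may assume each edge is ``short'': either it maps into a single block boundary, or it has a sub-arc mapping into a single block boundary (this uses Lemma~\ref{l:itineraries} for $X_{CK}$, the three-disjoint-paths argument of Theorem~\ref{t:planar} to control vertices with infinite itinerary, and \cite[Lemma 4]{crokekleiner} for vertices landing in a block boundary). Consequently each edge of $\gG$ meets only finitely many block boundaries, and by finiteness of $\gG$ there is a finite subcomplex $\mathcal{F}_1 \subset \partial_\infty\widetilde{X}_1$, a union of finitely many block boundaries glued along finitely many poles, with $h(\gG) \subseteq \mathcal{F}_1$. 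Whenever an edge of $\gG$ passes from one block boundary to an adjacent one, it does so through a pole (again by \cite[Lemma 4]{crokekleiner} and Lemma~\ref{lemma:inf_tits_len}), and this pole is $\Phi$-matched to a genuine pole on the $X_2$ side.

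Let $\mathcal{F}_2 \subset \partial_\infty\widetilde{X}_2$ be the $\Phi$-matched finite subcomplex, and let $\Psi\colon \mathcal{F}_1 \to \mathcal{F}_2$ be a $\Phi$-compatible homeomorphism. Then $\Psi \circ h\colon \gG \to \partial_\infty\widetilde{X}_2$ is an embedding of $\gG$ into $\partial_\infty\widetilde{X}_2$, as required.
\end{proof}
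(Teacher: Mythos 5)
Your argument has a genuine gap at its central step: from ``each edge either maps into a single block boundary or has a sub-arc mapping into a single block boundary'' you conclude that $h(\gG)\subseteq \mathcal{F}_1$ for a finite union $\mathcal{F}_1$ of block boundaries. That does not follow, and it is false in general: an edge of $\gG$ may contain points with infinite $p$-itinerary, which lie in no block boundary at all, so the image of the original embedding need not be contained in the union of block boundaries, let alone a finite subunion. This is exactly the difficulty the paper's proof is built to handle: one must \emph{modify} the embedding, not transport it. The paper shows (using Lemma~\ref{lemma:inf_tits_len}, \cite[Prop 3.14]{qing}, and the argument of Lemma~\ref{l:itineraries}) that near each infinite-itinerary point $v$ on an edge there is a block $B$ which the edge enters on both sides of $v$, and then replaces the sub-arc through $v$ by a longitude of $\partial_\infty B$ joining poles of $B$; compactness of the set of infinite-itinerary points gives finitely many replacements, and the fact that the endpoints of each replacement form a cut pair is what lets overlapping replacements be combined into a genuine re-embedding of $\gG$ inside the union of block boundaries. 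None of this re-routing appears in your proposal, and without it your finite subcomplex $\mathcal{F}_1$ simply does not contain $h(\gG)$.

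Two further points. First, your transfer step replaces the paper's appeal to Xie's theorem (\cite[Theorem 6.1]{xie}, homeomorphism of the cores of the Tits boundaries) by an asserted ``$\Phi$-matched subcomplexes are homeomorphic'' claim; this is plausible for \emph{finite} unions but is not proved, and note the blocks are glued along wall boundary circles (with the adjacent blocks' poles sitting in the interiors of longitudes), not merely ``along finitely many poles,'' so building the compatible homeomorphism requires matching Cantor-set patterns and pole positions --- precisely the kind of statement one should not wave through in the Croke--Kleiner setting, where naive ``it's all combinatorial'' reasoning is what the non-homeomorphism of the boundaries refutes at the global level. Second, your framing that ``every locally CAT(0) metric on $X_{CK}$ is non right-angled'' is a misstatement: $X_{CK}=Y_0\cup_\beta T^2$ is glued along the two curves $\alpha_0,\beta$ on the torus $W_0$, whose angle can be varied at will (that is the whole Croke--Kleiner phenomenon); the correct point, which the paper uses, is \cite[Lemma 4]{crokekleiner}: local path components of poles lie in block boundaries for every such metric, so no analogue of the horizontal circles exists.
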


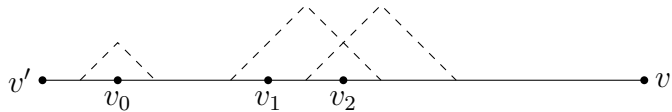
\begin{figure} \label{figure:combine_paths}
\begin{center}
\begin{tikzpicture}
			\draw (-4,0) -- (4,0);
			\draw[dashed] (-3.5, 0) -- (-3,.5) -- (-2.5, 0);
			\fill (-4,0)circle[radius=1.5pt];
			\fill (4,0)circle[radius=1.5pt];
			
			\fill (-3,0)circle[radius=1.5pt];
			
			\fill (-1,0)circle[radius=1.5pt];
			\fill (-0,0)circle[radius=1.5pt];
			
			\draw[dashed] (-1.5, 0) -- (-.5, 1) -- (.5, 0);			
			\node[right] at (4, 0) {$v$};
			\node[left] at (-4, 0) {$v'$};
			\node[below] at (-3, 0) {$v_0$};
				\node[below] at (-1, 0) {$v_1$};
					\node[below] at (-0, 0) {$v_2$};
					\draw[dashed] (-.5, 0) -- (.5, 1) -- (1.5, 0);

\end{tikzpicture}
\end{center}
\caption{Modifying the path between $v$ and $v'$ to have finite Tits length. The dashed lines represent the new paths constructed between poles of the same block. }
\end{figure}

\begin{proof}
A theorem of Xie~\cite[Theorem 6.1]{xie} implies that the {\it core} of the Tits boundaries of $\widetilde{X}_1$ and $\widetilde{X}_2$ are homeomorphic, where the core of the Tits boundary is the union of all topological circles in the boundary. Thus, if the graph $\Gamma \subset \p \wtX_1$ is contained in the union of the block boundaries, then the theorem follows. We prove that if a graph $\Gamma$ embeds in $\p \wtX_1$, then $\Gamma$ embeds in the union of the block boundaries in $\p \wtX_1$.

The coning argument in the proof of Theorem~\ref{t:planar} implies that there are no circles in the boundary so that each point on the circle has infinite $p$-itinerary for some basepoint $p$. Moreover, any vertex of the graph $\Gamma$ of valence greater than two must be contained in the union of the block boundaries. Thus, we may assume the graph $\Gamma \subset \p \wtX_1$ contains a point contained in the union of the block boundaries. 

We show that any edge of $\gG$ of infinite Tits length can be replaced with a nearby edge of finite Tits length. We may assume the edge contains a point contained in the union of the block boundaries. If the edge has infinite Tits length, then the edge must contain a point $v$ with infinite itinerary by Lemma~\ref{lemma:inf_tits_len}. By \cite[Prop 3.14]{qing}, there are points arbitrarily close to $v$ with finite itinerary. The proof of Lemma \ref{l:itineraries} goes through to show that there is a block $B$ of $\wtX_1$ so that each edge of $\Gamma$ containing $v$ contains points in $\partial_\infty B$. Thus, by the same reasoning as in the proof of Theorem \ref{t:planar}, $v$ is a vertex of degree $\le 2$ in $\gG$ or contained in an edge of $\gG$. If the vertex $v$ has degree $1$, shorten the edge containing $v$ to an edge ending at a pole of $\partial_\infty B$, where $B$ is a block as above.

If the vertex $v$ has degree $2$ or is contained in an edge of $\Gamma$, choose a neighborhood $U$ of $v$ in $\p \wtX_1$ so that $U \cap \gG$ only intersects a neighborhood of $v$ in $\gG$. Choose a closed neighborhood of $v$ in $\Gamma$ homeomorphic to an interval so that the endpoints of the neighborhood are contained in $U$ and map to poles of the same block. Replace this path with a longitude in this neighborhood which connects these poles. 

Replace paths in this way for each vertex $v$ with infinite itinerary in the edge~$e$. The collection of these points is closed, so these replacements can be made for a finite number of points and will cover the edge (or at least the part of the edge which contains points with infinite itinerary).  Each pair of endpoints of the replacement paths form a cut pair in $\partial_\infty \widetilde X_{CK}$. Thus, if two replacements overlap, then they contain the same pole and can be combined, see Figure~\ref{figure:combine_paths}. Therefore, these replacements can be glued together to form a path with finite Tits length. Since the path contained a point in the union of the block boundaries, the path is contained in the union of the block boundaries as desired. 
\end{proof}

\begin{remark}
Note that the above proof works for embedding graphs in the boundaries of our main example, as long as $X$ is not right-angled. Therefore, any two of these non-right-angled boundaries contain the same set of finite graphs. We expect that no two of these boundaries are homeomorphic. 
\end{remark}

\begin{remark}
Once one knows that the finite graphs inside $\partial_\infty \widetilde X_{CK}$ can be ``pushed" into the union of block boundaries, it is easy to describe precisely which finite graphs live inside $\partial_\infty \widetilde X_{CK}$. A concise description is the following: given a graph $\Gamma$, let $D(\Gamma)$ denote the graph obtained by ``doubling the vertices", i.e. replacing each vertex $v \in \Gamma$ with $v^+$ and $v^-$, and letting $[v^{+/-},w^{+/-}]$ be an edge in $D(\Gamma)$ if and only if $[vw]$ is an edge in $\Gamma$. For example, an edge in $\Gamma$ becomes a $4$-cycle in $D(\Gamma)$. If $\Gamma$ is a tree, then $D(\Gamma)$ is planar. The finite subgraphs of $\partial_\infty \widetilde X_{CK}$ are precisely the finite subgraphs of some subdivision of $D(\Gamma)$ for $\Gamma$ a finite tree. 

\end{remark}

\bibliographystyle{alpha}
\bibliography{Ref}     

\end{document}